\tikzset{sno/.style={circle, draw, fill=black!50,inner sep=0pt, minimum width=4pt}}
\newtheorem{theorem}{Theorem}[section]
\newtheorem{lemma}[theorem]{Lemma}
\newtheorem{proposition}[theorem]{Proposition}
\newtheorem{corollary}[theorem]{Corollary}
\theoremstyle{definition}
\theoremstyle{definitions}
\newtheorem{definition}[theorem]{Definition}
\newtheorem{remark}[theorem]{Remark}
\newtheorem{example}[theorem]{Example}
\theoremstyle{notations}
\theoremstyle{remarks}
\journal{}
\begin{document}

\begin{frontmatter}



\title{On Subgroup Topologies on Fundamental Groups}


\author[]{M. Abdullahi Rashid}
\ead{mbinev@mail.um.ac.ir}
\author[]{N. Jamali}
\ead{no.jamali@stu.um.ac.ir}
\author[]{B. Mashayekhy}
\ead{bmashf@um.ac.ir }
\author[]{S.Z. Pashaei}
\ead{Pashaei.seyyedzeynal@stu.um.ac.ir}
\author[]{H. Torabi\corref{cor1}}
\ead{h.torabi@ferdowsi.um.ac.ir}

\address{Department of Pure Mathematics, Center of Excellence in Analysis on Algebraic Structures, Ferdowsi University of Mashhad,\\
P.O.Box 1159-91775, Mashhad, Iran.}
\cortext[cor1]{Corresponding author}

\begin{abstract}
It is important to classify covering subgroups of the fundamental group of a topological space using their topological properties in the topologized fundamental  group. In this paper, we introduce and study some topologies on the fundamental group and use them to classify coverings, semicoverings, and generalized coverings of a topological space. To do this, we use the concept of subgroup topology on a group and discuss their properties. In particular, we explore which of these topologies make the fundamental group a topological group. Moreover, we provide some examples of topological spaces to compare topologies of fundamental groups.

\end{abstract}

\begin{keyword}
Semicovering \sep Generalized covering\sep Topological group\sep Spanier topology\sep Whisker topology\sep Lasso topology \sep Subgroup topology.

\MSC[2010]{57M10, 57M12, 57M05, 55Q05.}

\end{keyword}

\end{frontmatter}



\section{Introduction and Motivation}

The concept of a natural topology for the fundamental group is introduced by Hurewicz \cite{Hur} in 1935. It received further attention in 1950 by Dugundji \cite{Dug} and more recently by Biss \cite{Biss}, Calcut and McCarthy \cite{CalMc}, Brazas \cite{BrazT}  and others. For instance, Calcut and McCarthy  proved the following theorem.

\begin{theorem} \cite{CalMc} \label{tr11}
Let $ X $ be a locally path connected topological space. The topological fundamental group $ \pi_1^{qtop}(X,x) $ is discrete if and only if $ X $ is semilocally simply connected.
\end{theorem}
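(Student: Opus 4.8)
The plan is to use the standard description $\pi_1^{qtop}(X,x)=\Omega(X,x)/{\simeq}$, where $\Omega(X,x)$ is the loop space with the compact-open topology and $q\colon\Omega(X,x)\to\pi_1^{qtop}(X,x)$ sends a loop to its based homotopy class; the fibres of $q$ are precisely the path components of $\Omega(X,x)$, so $\pi_1^{qtop}(X,x)$ is discrete if and only if $q$ is locally constant, equivalently if and only if every fibre $q^{-1}(g)$ is open in $\Omega(X,x)$. Since $\pi_1$ and $\pi_1^{qtop}$ depend only on the path component of $x$, and this component is open because $X$ is locally path connected, I may and do assume $X$ is connected, hence path connected.

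\textbf{($\Leftarrow$)} Suppose $X$ is semilocally simply connected; I would prove $q$ is locally constant. Given $\gamma\in\Omega(X,x)$, for each $t\in[0,1]$ I choose an open neighbourhood of $\gamma(t)$ with trivial image in $\pi_1(X,\gamma(t))$ and shrink it, using local path connectedness, to a path-connected such neighbourhood; a Lebesgue-number argument applied to the pullback cover of $[0,1]$ produces a partition $0=s_0<\dots<s_n=1$ together with path-connected open sets $W_1,\dots,W_n$, each with trivial image in $\pi_1(X)$, such that $\gamma([s_{i-1},s_i])\subseteq W_i$. For $0<i<n$ one has $\gamma(s_i)\in W_i\cap W_{i+1}$, so I pick (local path connectedness again) a path-connected open $V_i$ with $\gamma(s_i)\in V_i\subseteq W_i\cap W_{i+1}$. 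Then
\[
\mathcal U_\gamma=\{\beta\in\Omega(X,x): \beta([s_{i-1},s_i])\subseteq W_i\ \ (1\le i\le n),\ \ \beta(s_i)\in V_i\ \ (0<i<n)\}
\]
is a basic compact-open neighbourhood of $\gamma$, and the claim is that $q$ is constant on it. Given $\beta\in\mathcal U_\gamma$, pick for $0<i<n$ a path $\sigma_i\subseteq V_i$ from $\gamma(s_i)$ to $\beta(s_i)$, and set $\sigma_0=\sigma_n=c_x$. For each $i$ the loop $\big(\overline{\sigma_{i-1}}\ast\gamma|_{[s_{i-1},s_i]}\ast\sigma_i\big)\ast\overline{\beta|_{[s_{i-1},s_i]}}$ lies entirely in $W_i$ (because $\sigma_{i-1}\subseteq V_{i-1}\subseteq W_i$ and $\sigma_i\subseteq V_i\subseteq W_i$), hence is null-homotopic in $X$; concatenating the resulting based homotopies and cancelling the backtracking pairs $\sigma_i\ast\overline{\sigma_i}$ yields $\beta\simeq\gamma$. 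So every fibre of $q$ is open and $\pi_1^{qtop}(X,x)$ is discrete. (Alternatively, a connected, locally path connected, semilocally simply connected space admits a universal covering $p\colon\widetilde X\to X$, and sending a loop to the endpoint of its unique lift starting at a fixed $\widetilde x\in p^{-1}(x)$ is a continuous — hence locally constant — map $\Omega(X,x)\to p^{-1}(x)$ whose fibre over $\widetilde x$ is $q^{-1}(1)$.) The two places where local path connectedness is genuinely needed are the path connectedness of the $W_i$, which makes ``trivial image in $\pi_1(X)$'' meaningful at the shifted basepoints $\beta(s_{i-1})$, and the auxiliary sets $V_i$ confining the correction paths $\sigma_i$ to a single $W_i$; without the latter, $\gamma(s_i)$ and $\beta(s_i)$ need not lie in the same path component of $W_i\cap W_{i+1}$. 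I expect this neighbourhood construction to be the main obstacle.

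\textbf{($\Rightarrow$)} Suppose $\pi_1^{qtop}(X,x)$ is discrete. Then $N:=q^{-1}(1)$ — the set of null-homotopic loops at $x$ — is open in $\Omega(X,x)$, so it contains a basic compact-open neighbourhood $\bigcap_k\langle K_k,U_k\rangle$ of the constant loop $c_x$; since $c_x(K_k)=\{x\}$ we have $x\in U_k$ for every $k$, and with $U:=\bigcap_k U_k$ one gets $\{\beta\in\Omega(X,x):\beta([0,1])\subseteq U\}\subseteq N$. Thus every loop at $x$ with image in $U$ is null-homotopic in $X$, i.e.\ $\pi_1(U,x)\to\pi_1(X,x)$ is trivial, so $X$ is semilocally simply connected at $x$. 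For an arbitrary $y\in X$ I choose a path $\delta$ from $x$ to $y$; the map $\alpha\mapsto\overline{\delta}\ast\alpha\ast\delta$ is a homeomorphism $\Omega(X,x)\to\Omega(X,y)$ descending to a homeomorphism $\pi_1^{qtop}(X,x)\to\pi_1^{qtop}(X,y)$, so $\pi_1^{qtop}(X,y)$ is discrete as well, and the argument just given (applied at $y$) produces an open neighbourhood of $y$ with trivial image in $\pi_1(X,y)$. Hence $X$ is semilocally simply connected. This direction is essentially formal and presents no real obstacle.
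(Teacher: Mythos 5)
Your proof is correct, but there is nothing in the paper to compare it against: Theorem \ref{tr11} is quoted from Calcut and McCarthy \cite{CalMc} without proof. Your argument is essentially the standard published one — for sufficiency, a Lebesgue-number subdivision of a loop into pieces lying in path-connected neighbourhoods with trivial $\pi_1$-image, with the correction paths $\sigma_i$ confined to the path-connected overlaps $V_i$, shows the basic compact-open neighbourhood $\mathcal{U}_\gamma$ lies in a single homotopy class; for necessity, a basic compact-open neighbourhood of the constant loop inside the open set $q^{-1}(1)$ yields an open $U\ni x$ with $i_*\pi_1(U,x)=1$, and conjugation by paths transports discreteness and the argument to every other basepoint. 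Your two flagged uses of local path connectedness are exactly the right ones. One caveat: the reduction ``I may and do assume $X$ is connected'' is harmless for sufficiency but not for necessity, since semilocal simple connectedness is a condition at \emph{every} point of $X$, not only on the path component of $x$; as literally stated (without a connectedness hypothesis) the theorem is false — take $X$ to be the disjoint union of a simply connected open set containing $x$ and a Hawaiian earring. The original theorem of Calcut and McCarthy assumes $X$ path connected, and with that hypothesis restored your proof is complete.
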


It is known that out of the category of semilocally simply connected spaces, classification of covering spaces is not accessible. Brazas \cite{BrazSO} showed that for semicovering spaces by some nice local properties, there is a classification based on the qtop-topology on the fundamental group. The purpose of this paper is to introduce and study some other topologies on the fundamental group to provide a classification of covering, semicovering and generalized coverings of a topological space. In addition, similar to Theorem \ref{tr11} it is of interest to find out for which topological space, the relative topologized fundamental group is discrete or trivial under new topologies (see the diagram).

 
 Recall that a continuous map $ p: \widetilde{X} \rightarrow X $ is a covering map if every point of $ X  $ has an open neighborhood which is evenly covered by $ p $. Brazas \cite{BrazS} defined semicoverings by removing the evenly covered property and keeping local homeomorphism with continuous lifting of paths and homotopies. Based on some simplifications done in \cite{BrazSO, Kow2}, we use a continuous surjective local homeomorphism with the unique path lifting property as the standard definition of semicovering maps. For generalized coverings, the local homeomorphism is replaced with the unique lifting property (see \cite{paper1}). In each case, the induced homomorphism $ p_*:\pi_1(\widetilde{X},\tilde{x}) \rightarrow  \pi_1(X,x) $ is a monomorphism and so $  \pi_1(\widetilde{X},\tilde{x}) \cong p_*\pi_1(\widetilde{X},\tilde{x}) $ is a subgroup of $ \pi_1(X,x) $. A subgroup $ H $ of the fundamental group $ \pi_1(X,x) $ is called covering, semicovering and generalized covering subgroup if there is a covering, semicovering and generalized covering map $ p: (\widetilde{X},\tilde{x}) \rightarrow (X,x)  $ such that $ H = p_*\pi_1(\widetilde{X},\tilde{x}) $, respectively.

In order to classification of various types of covering subgroups in the fundamental group using their topological properties on $ \pi_1^{qtop}(X,x) $, Brazas  gathered some results in a diagram \cite[page 288]{BrazSO}. More precisely, it was shown that for a connected locally path connected space $ X $, a subgroup $ H \leq \pi_1(X,x) $ is a semicovering subgroup if and only if $ H $ is open in $ \pi_1^{qtop}(X,x) $. It seems interesting to express similar results for other types of coverings, using another topologies on the fundamental group. The Spanier subgroup topology is a suitable one to characterize covering subgroups and lead us to a class of topologies on groups which is called subgroup topology. 

In this class of topologies on a group, a collection of subgroups with the finite intersection property, which is called the neighbourhood family, creates a local base for the trivial element. This local base can be transferred to all elements of the group, since the left translation maps are continuous. Therefore, the collection of all left cosets of subgroups contained in the neighbourhood family forms the subgroup topology on the group. Bogley et al. \cite{Bog} introduced two types of the subgroup topologies on the fundamental groupoid and studied the properties of the fibres from the endpoint projection map. In Section \ref{Sec2}, we study some general properties of subgroup topologies and show that a group $ G $ equipped with the subgroup topology is a topological group if and only if all its right translation maps are continuous (Proposition \ref{pr2.1}). Then, by extending the concept of coverable spaces, we introduce different classes of coverability for a variety of coverings using the subgroup topologies on the fundamental groups.

In Section \ref{Sec3}, some types of subgroup topologies on the fundamental group and its properties are studied. As mentioned previously, the Spanier subgroup topology, which determined by the collection of all Spanier subgroups as the neighbourhood family, characterize a well-known classification of covering subgroups such as: \textit{A subgroup $ H $ of the fundamental group is a covering subgroup if and only if $ H $ is an open subgroup of the Spanier subgroup topology (Theorem \ref{th31})}. In order to study different topologies on the fundamental group, we show in Proposition \ref{pr3.4} that the \textit{lasso topology} on the fundamental group, which was introduced in \cite{Brod7},  coincide with the Spanier subgroup topology. Another type of the subgroup topology on the fundamental group is the \textit{path Spanier topology} which its relative neighbourhood family contains all path Spanier subgroups of the fundamental group. In Proposition \ref{pr3.7} it is shown that the discreteness of these two topologies (Spanier and path Spanier subgroup topology) is equivalent to $ X $ be unbased semilocally simply connected. On the other hand, Wilkins \cite{Wil} showed that if all elements of the neighbourhood family of a subgroup topology on a group $ G $ are normal subgroups, then $ G $ is a topological group. Although, an arbitrary path Spanier subgroup of the fundamental group does not necessary be normal, in general, we show that the path Spanier subgroup topology always make the fundamental group a topological group (Proposition \ref{pr3.8}). 

In continue, we compare these topologies with the other known types of topologies on the fundamental group such as the inherited topology from the compact-open topology, which is called the \textit{qtop-topology}, the  \textit{$ \tau $- topology} which was introduced in \cite{BrazT}, the \textit{whisker topology} and the \textit{gcov-topology} (Definition \ref{de321}). Recall from \cite[Lemma 3.1]{paper1} that the whisker topology is another type of the subgroup topology on the fundamental group. Indeed, the ralationship between the mentioned topologies on the fundamental group of locally path connected spaces is gathered in Chain $ (*) $. Some examples and counterexamples show that these topologies may be different, in general. Moreover, the diagram shows the relationship of discreteness of the subgroup topologies together.


\section{Subgroup Topology} \label{Sec2}

The \textit{subgroup topology} on a group $G$ specified by a family of subgroups of $G$ was defined in \cite[section 2.5]{Bog}
 and considered by some recent researchers such as \cite{Wil,BrazFabTH}. The collection $\Sigma$ of subgroups of $G$ is called a
 \textit{neighbourhood family} if for any $H,K \in \Sigma$, there is a subgroup $S \in \Sigma$ such that $S \subseteq H\cap K$. As a result of this property, the collection of all left cosets of elements of $\Sigma$ forms a basis for a topology on $G$, which is called the subgroup topology determined by $\Sigma$. Bogley et al. \cite{Bog} focused on some general properties of subgroup topologies and showed that they are homogeneous spaces, since left translation by elements of $G$ determine self-homemorphisms of $G$. Also,  they introduced the intersection $S_\Sigma=\cap \{H \ \vert \ H \in \Sigma\}$, called \textit{infinitesimal} subgroup for the neighbourhood family $\Sigma$ and showed that the closure of the element $g\in G$ is the coset $g S_\Sigma$. Although it is pointed out in \cite{Bog} that the group $ G $ equipped with a subgroup topology in general may not necessarily a topological group  (it may not even a quasitopological group), because right translation maps by a fixed element of $G$ need not be continuous, but it has some of properties of topological groups  (for more details see Theorem 2.9 from \cite{Bog}).  Moreover, if $ H $ is a subgroup of $ G $, and $ K $ is a subgroup of $ H $ which is open in $ G $ topologized with a subgroup topology, then $ H $ is also open in $ G $ since $ H $ decomposes as a union of open cosets of $ K $.\\

On the other hand, Wilkins \cite[Lemma 5.4]{Wil} showed that a group $ G $ with the subgroup topology determined by a neighbourhood family $ \Sigma $  is a topological group when all subgroups in $\Sigma$ are normal. Since all left translation maps by elements of a group $G$ equipped with a subgroup topology are continuous, then the group $G$ is a left topological group by the sense of Arhangeliskii's topological groups \cite[page 12]{Arx}. In the following proposition we show that if right translation maps by elements of $ G $ are also continuous, then $G$ will be a topological group. \\
Note that a right translation map $r_t: G \to G$ by the element $t \in G$, is $r_t(g)=gt \quad \forall g \in G$.

\begin{proposition} \label{pr2.1}
Let $G$ be a group equipped with the subgroup topology determined by the neighbourhood family $\Sigma$. If all right translation maps are continuous, then $G$ is a topological group.
\end{proposition}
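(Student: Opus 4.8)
The plan is to show that a group $G$ carrying a subgroup topology is automatically a semitopological group, and then upgrade this to a topological group once we know that right translations are homeomorphisms. First I would record what comes for free: left translations are homeomorphisms by hypothesis (this is built into the definition of the subgroup topology, as recalled from \cite{Bog}), and if all right translations are continuous then, being bijections with continuous inverses $r_{t^{-1}}$, they are homeomorphisms as well. Thus both one-sided translations are homeomorphisms, so $G$ is a semitopological group, and in particular the topology is determined near an arbitrary point $g$ by the basic open neighbourhoods $gH$ with $H \in \Sigma$.

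Next I would verify continuity of inversion $\iota\colon G \to G$, $\iota(g)=g^{-1}$. Fix $g \in G$ and a basic open neighbourhood $g^{-1}H$ of $g^{-1}$, with $H \in \Sigma$. Since $H$ is a subgroup, $g^{-1}H = (Hg)^{-1}$, equivalently $\iota^{-1}(g^{-1}H) = \{x : x^{-1} \in g^{-1}H\} = \{x : x \in H^{-1}g\} = Hg$. Now $Hg = r_g(H)$, and $H$ is open (it is the coset $eH$), so $Hg$ is open because $r_g$ is a homeomorphism. Hence $\iota^{-1}$ of a basic open set is open, so inversion is continuous. (One must be a little careful here: $H g$ is open as the image of the open set $H$ under the homeomorphism $r_g$ — this is exactly the point where continuity of right translation is used.)

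Finally I would check joint continuity of multiplication $\mu\colon G \times G \to G$. Let $g,h \in G$ and let $ghK$ be a basic open neighbourhood of $gh$, with $K \in \Sigma$. I want open neighbourhoods $U \ni g$, $V \ni h$ with $UV \subseteq ghK$. The natural guess is $V = hK$ and $U = g(hKh^{-1})$; then $UV = g(hKh^{-1})(hK) = g h K h^{-1} h K = ghK$ since $K$ is a subgroup. The issue is whether $U = g(hKh^{-1})$ is open, i.e.\ whether the conjugate $hKh^{-1}$ is open — and here one uses that $h K h^{-1} = r_{h^{-1}}(\,h K\,) = r_{h^{-1}}\big(\ell_h(K)\big)$ is the image of the open set $K$ under the composite of two homeomorphisms, hence open. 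So $U$ is an open neighbourhood of $g$, $V$ an open neighbourhood of $h$, and $\mu(U\times V) \subseteq ghK$, giving joint continuity of $\mu$.

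The main obstacle is conceptual rather than computational: in a general subgroup-topologized group conjugates of members of $\Sigma$ need not be open, which is precisely why such a $G$ need not be a topological (or even quasitopological) group; the hypothesis that right translations are continuous is exactly what makes every conjugate $hKh^{-1}$ (and every right coset $Hg$) open, and once that is available both the inversion and the multiplication arguments go through by the short coset manipulations above. Together, continuity of inversion and of multiplication show $G$ is a topological group. $\qed$
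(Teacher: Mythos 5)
Your proof is correct and rests on the same key mechanism as the paper's: continuity of the right translations (together with the always-continuous left translations) makes right cosets $Hg$ and conjugates $hKh^{-1}$ open, from which continuity of inversion and of multiplication follow by short coset computations. The paper phrases the same argument pointwise, producing for each $H\in\Sigma$ a $K\in\Sigma$ with $Kg^{-1}\subseteq g^{-1}H$ and then $Kg_2\subseteq g_2H$, whereas you compute the full preimage $\iota^{-1}(g^{-1}H)=Hg$ and exhibit the open neighbourhoods $g(hKh^{-1})$ and $hK$ directly; this is only a stylistic difference.
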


\begin{proof}
It is enough to show continuity of operations taking inverse and multiplication. Let  $f: G \to G$ defined by $g \mapsto g^{-1}$ be the inverse operation and fix $g\in G$. Clearly, for every $ H \in \Sigma $, $ g^{-1}H $ is a basis open neighbourhood of the subgroup topology containing $g^{-1} $ ( Note that for any $ sH $ containing $ g^{-1} $ we have $ sH=g^{-1}H $). By hypothesis, the right translation map $ r_{g^{-1}}:G \rightarrow G $ with $ r_{g^{-1}}(s) = sg^{-1} $ is continuous. Then, for any $ s \in  G $ and for every $ H \in \Sigma $ there is a $ K \in \Sigma $ such that 
\[
 sKg^{-1}=r_{g^{-1}}(sK) \subseteq sg^{-1}H 
\]
 and so $ Kg^{-1} \subseteq g^{-1}H $. Now for such $ K $,
 \[
  f(gK)=Kg^{-1} \subseteq g^{-1}H 
  \]
   which shows that $f$ is continuous. For continuity of the multiplication map $m: G \times G \to G$ defined by $m: (g_1, g_2) \mapsto g_1g_2$, let $g_1g_2H$ be a basis open neighbourhood of $G$ containing $g_1g_2$ for $ H \in \Sigma $. Applying the continuity of taking inverse for the element $g_2^{-1}\in G$,  implies that for every $ H \in \Sigma $ there exists a subgroup $K \in \Sigma$ such that $ K g_2=f(g_2^{-1}K) \subseteq g_2H$. Therefore,
\[
m(g_1K, g_2 H)=g_1 K g_2 H \subseteq g_1g_2H,
\]
which shows that the multiplication map is continuous under product topology.
\end{proof}

Clearly, every topological group is also a left and right topological group. The following corollary is the immediate consequence of this fact and the above proposition. 

\begin{corollary}
A group equipped with a subgroup topology is a topological group if and only if all right translation maps are continuous.
\end{corollary}

Pakdaman et al. \cite[Definition 2.4]{MTPUCo} introduced the notion of coverable spaces in such a way that a pointed topological space $ (X,x_0) $ is called coverable if it has the categorical universal covering space or equivalently the Spanier group, $ \pi_1^{sp}(X,x_0) $, is a covering subgroup. Recall that $ \pi_1^{sp}(X,x_0) $ is the intersection of all of the Spanier subgroups $ \pi(\mathcal{U},x_0) $, where $ \mathcal{U} $ is an open cover of $ X $ i.e. $ \pi_1^{sp}(X,x_0) $ is the infinitesimal subgroup of Spanier subgroup topology on the fundamental group (for more details see the next section). Therefore, a topological space $ X $ is coverable if and only if the infinitesimal subgroup of the Spanier subgroup topology is open.
Note that the infinitesimal subgroup $ S_\Sigma $ of $ G $ need not be an open subgroup, in general. However, some nice properties may occur when $ S_\Sigma $ is open. In the case of  fundamental groups one can guess the following notion (see \cite{MTPUCo}).

 \begin{definition}
Let $ (X,x_0) $ be a pointed topological space and $ \pi_1(X,x_0) $ be the fundamental group equipped with the subgroup topology which determined by the neighbourhood family $\Sigma$. Then $ X $ is called \textit{$ \Sigma$-coverable} if the infinitesimal subgroup $ S_\Sigma $ is open in $\pi_1^{\Sigma}(X,x_0)$.
\end{definition}

Clearly, the infinitesimal subgroup $S_\Sigma$ is open in $G$ if and only if $ S_\Sigma \in \Sigma $. Moreover, if $ S_\Sigma \in \Sigma $, then any intersection of open subgroups of $ G $ are open. Moreover, it can be seen that in every left (right) topological groups, any open subgroup is closed but the converse does not hold, in general. The following proposition shows that it will be hold when the infinitesimal subgroup is an open subgroup.

\begin{proposition} \label{pr2.3}
Let $\pi_1^{\Sigma}(X,x_0)$ be the fundamental group of $ (X,x_0) $ equipped with a subgroup topology determined by $ \Sigma $. Then the following statements are equivalent.

\begin{enumerate}
\item  $ X $ is $ \Sigma$-coverable.
\item  Every closed subgroup of $\pi_1^{\Sigma}(X,x_0)$ is an open subgroup.
\item  A subgroup $ H $ of $\pi_1^{\Sigma}(X,x_0)$ is open if and only if it is closed.
\item  A subgroup $ H $ of $\pi_1^{\Sigma}(X,x_0)$ is open if and only if $ S_\Sigma \leq H $.
\end{enumerate}
\end{proposition}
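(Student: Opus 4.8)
The plan is to prove the equivalences in the cyclic order $(1)\Rightarrow(4)\Rightarrow(3)\Rightarrow(2)\Rightarrow(1)$, leaning throughout on two facts already recorded in the text: first, that $S_\Sigma$ is open in $\pi_1^\Sigma(X,x_0)$ if and only if $S_\Sigma\in\Sigma$, and, by the remark in Section~\ref{Sec2}, that a subgroup of $G$ containing an open subgroup is itself open (it decomposes into open cosets); second, that in a left topological group every open subgroup is automatically closed, since its complement is a union of (open) cosets.

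For $(1)\Rightarrow(4)$: assume $X$ is $\Sigma$-coverable, so $S_\Sigma\in\Sigma$. If $S_\Sigma\leq H$, then $H$ contains the open subgroup $S_\Sigma$ and hence is open by the union-of-cosets remark; conversely, if $H$ is open, then $H$ is a union of basic open sets, so $H$ contains some coset $gK$ with $K\in\Sigma$, and since $1\in H$ we may take the coset through $1$, giving $K\subseteq H$; but $S_\Sigma\subseteq K$ by definition of the infinitesimal subgroup, so $S_\Sigma\leq H$. Thus (4) holds. The implication $(4)\Rightarrow(3)$ is where the work concentrates: if $H$ is open, then $H$ is closed by the left-topological-group fact; for the reverse direction, suppose $H$ is a closed subgroup. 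Its closure is $H=HS_\Sigma$ by the description of closures in a subgroup topology (the closure of $g$ is $gS_\Sigma$, recalled in Section~\ref{Sec2}, so the closure of the subgroup $H$ is $\bigcup_{h\in H}hS_\Sigma=HS_\Sigma$); since $H$ is closed this forces $S_\Sigma\subseteq H$, and then by (4) the subgroup $H$ is open. Hence (3).

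The remaining steps are short. For $(3)\Rightarrow(2)$: (3) says open $\iff$ closed for subgroups, so in particular every closed subgroup is open, which is exactly (2). For $(2)\Rightarrow(1)$: the infinitesimal subgroup $S_\Sigma=\bigcap_{H\in\Sigma}H$ is an intersection of subgroups each of which is open, hence closed (open subgroups are closed in a left topological group), so $S_\Sigma$ is an intersection of closed sets and therefore closed; applying (2) to the closed subgroup $S_\Sigma$ shows $S_\Sigma$ is open, i.e. $S_\Sigma\in\Sigma$, which is precisely $\Sigma$-coverability. This closes the cycle.

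The main obstacle I anticipate is the reverse direction of $(4)\Rightarrow(3)$ (equivalently, showing a closed subgroup must contain $S_\Sigma$): one must correctly invoke the formula $\overline{\{g\}}=gS_\Sigma$ from \cite{Bog} and argue that it propagates to $\overline{H}=HS_\Sigma$ for a subgroup $H$. Concretely, $\overline{H}=\bigcup_{h\in H}\overline{\{h\}}$ holds because a closed set containing $H$ must contain each $\overline{\{h\}}$, and conversely $HS_\Sigma$ is closed since it is a union of cosets of $S_\Sigma$ and $S_\Sigma$ is itself closed (being an intersection of the closed subgroups in $\Sigma$) — in fact $HS_\Sigma$ is a subgroup because $S_\Sigma$ is normalized by $H$, which again follows from $hS_\Sigma h^{-1}\subseteq\bigcap_{H'\in\Sigma}hH'h^{-1}$ together with the fact that conjugation permutes the members of $\Sigma$ up to containment. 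Once this closure computation is in hand, every implication is a one-line deduction, and the rest of the proof is routine.
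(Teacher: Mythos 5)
Your proof is correct and rests on the same three facts the paper's proof uses — the closure of $g$ is $gS_\Sigma$, open subgroups of a left topological group are closed, and a subgroup containing an open subgroup is open — the only difference being organizational: you run one cycle $(1)\Rightarrow(4)\Rightarrow(3)\Rightarrow(2)\Rightarrow(1)$ where the paper proves the three pairwise equivalences $(1)\leftrightarrow(2)$, $(2)\leftrightarrow(3)$, $(1)\leftrightarrow(4)$. One caveat: your side remark that $HS_\Sigma$ is a closed subgroup because ``conjugation permutes the members of $\Sigma$ up to containment'' is not justified; a neighbourhood family need not be stable under conjugation (take $\Sigma=\{K\}$ for a single non-normal $K$, so that $S_\Sigma=K$), hence $S_\Sigma$ need not be normalized by $H$ and the full equality $\overline{H}=HS_\Sigma$ is not actually established. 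This does no damage, because the step you really need ($H$ closed $\Rightarrow S_\Sigma\leq H$) uses only the easy inclusion $S_\Sigma=\overline{\{1\}}\subseteq\overline{H}=H$, which is exactly the computation in the paper; I would simply delete the normalization detour.
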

\begin{proof}
$(1)\leftrightarrow (2)$ Let $ K $ be a closed subgroup of $\pi_{1}^{\Sigma}(X,x_{0})$ and put $ g \in K $. Since $ gS_\Sigma $ is the closure of $ g $, then $ gS_\Sigma \subseteq K$ and hence $ S_\Sigma \subseteq K$. It shows that $ K $ is open. The converse is trivial since $ S_\Sigma $ is a closed subgroup of $ G $.

$(2)\leftrightarrow (3)$ This is an immediate of the fact that $\pi_{1}^{\Sigma}(X,x_{0})$ is a left topological group.

$(1)\leftrightarrow (4)$ By definition if $X$ is $\Sigma$-\textbf{coverable}, then $ S_\Sigma $ is open and thus so is any subgroup $ H $ containing $ S_\Sigma $. The converse follows directly from the definition.
\end{proof}

\begin{remark}
Note that $ G $ equipped with the subgroup topology determined by the neighborhood family $ \Sigma $ is discrete if and only if the trivial subgroup belong to $ \Sigma $ and so $ S_\Sigma=1 $.
\end{remark}

It is well-known that the canonical group homomorphism $ \varphi: \pi_1(X,x_0) \times \pi_1(Y,y_0) \rightarrow \pi_1(X \times Y,(x_0,y_0))$ is an isomorphism. The question now is if the fundamental groups equipped with a topology, does $ \varphi $ become homeomorphism? Clearly, it is done when the fundamental groups are topological  groups with the topology they are equipped with. Brazas and Fabel \cite[Lemma 41]{BrazFab} showed that it does not hold for the induced topology from the compact-open topology where $ \pi_1^{qtop}(X \times Y,(x_0,y_0))$ is not a topological group. In the following we show that it is true for the fundamental groups equipped with a subgroup topology.

\begin{proposition} \label{pr26}
If the fundamental groups of pointed topological spaces $ (X,x_0) $ and $ (Y,y_0) $ equipped with subgroup topologies, then the canonical isomorphism $ \varphi: \pi_1(X,x_0) \times \pi_1(Y,y_0) \rightarrow \pi_1(X \times Y,(x_0,y_0)) $ is a homeomorphism.
\end{proposition}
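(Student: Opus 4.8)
The plan is to exploit that $\varphi$ is already a group isomorphism, so it is enough to show that $\varphi$ carries a basis of the product topology on $\pi_1(X,x_0)\times\pi_1(Y,y_0)$ bijectively onto a basis of the subgroup topology on $\pi_1(X\times Y,(x_0,y_0))$; a bijection with this property is automatically a homeomorphism. In particular no continuity-of-operations argument is needed here, only bookkeeping with cosets and neighbourhood families.

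First I would pin down the relevant neighbourhood families. Let $\Sigma_X$ and $\Sigma_Y$ determine the given subgroup topologies on $\pi_1(X,x_0)$ and $\pi_1(Y,y_0)$. I would check that $\Sigma_X\times\Sigma_Y:=\{\,H\times K\mid H\in\Sigma_X,\ K\in\Sigma_Y\,\}$ is a neighbourhood family of subgroups of $\pi_1(X,x_0)\times\pi_1(Y,y_0)$: given $H_1\times K_1$ and $H_2\times K_2$, pick $S\in\Sigma_X$ with $S\subseteq H_1\cap H_2$ and $T\in\Sigma_Y$ with $T\subseteq K_1\cap K_2$; then $S\times T\subseteq(H_1\times K_1)\cap(H_2\times K_2)$. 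Since $\{\,gH\mid g\in\pi_1(X,x_0),\ H\in\Sigma_X\,\}$ and $\{\,g'K\mid g'\in\pi_1(Y,y_0),\ K\in\Sigma_Y\,\}$ are bases for the two factor topologies, the sets $gH\times g'K$ form a basis of the product topology; and as subsets one has $gH\times g'K=(g,g')(H\times K)$. Hence the product of two subgroup topologies is itself the subgroup topology determined by $\Sigma_X\times\Sigma_Y$.

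Now I would transport everything through $\varphi$. Because $\varphi$ is a group isomorphism, each $\varphi(H\times K)$ is a subgroup of $\pi_1(X\times Y,(x_0,y_0))$, and $\varphi(\Sigma_X\times\Sigma_Y)=\{\varphi(H\times K)\}$ is again a neighbourhood family, since the finite-intersection property transfers along an isomorphism (using $\varphi(A)\cap\varphi(B)=\varphi(A\cap B)$); this is the neighbourhood family determining the subgroup topology on $\pi_1(X\times Y,(x_0,y_0))$ under consideration. As $\varphi$ is a homomorphism it sends left cosets to left cosets, $\varphi\big((g,g')(H\times K)\big)=\varphi(g,g')\,\varphi(H\times K)$, so $\varphi$ maps the basis $\{(g,g')(H\times K)\}$ of the product topology bijectively onto the basis $\{\varphi(g,g')\varphi(H\times K)\}$ of the subgroup topology on $\pi_1(X\times Y,(x_0,y_0))$; therefore $\varphi$ is a homeomorphism.

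The step I expect to require the most care is the identification of the neighbourhood family on $\pi_1(X\times Y,(x_0,y_0))$: one must know that the subgroup topology intended there is exactly the one generated by $\varphi(\Sigma_X\times\Sigma_Y)$. For the concrete topologies considered later (Spanier, path Spanier, whisker, \dots), this comes down to a combinatorial identity of the shape $\varphi\big(\pi(\mathcal U,x_0)\times\pi(\mathcal V,y_0)\big)=\pi(\mathcal U\times\mathcal V,(x_0,y_0))$ together with the cofinality of product covers $\mathcal U\times\mathcal V$ among the covers used to build the family; verifying those compatibilities, rather than the homeomorphism argument itself, is where the genuine work lies.
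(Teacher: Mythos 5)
Your proposal is correct and follows essentially the same route as the paper: both define the neighbourhood family on $\pi_1(X\times Y,(x_0,y_0))$ as the collection of products $H_X\times H_Y$ with $H_X\in\Sigma_X$, $H_Y\in\Sigma_Y$, verify the finite-intersection property, and observe that $\varphi$ carries basic cosets bijectively onto basic cosets. The caveat you flag at the end --- that for concrete topologies such as the Spanier topology one still needs to check that this transported family generates the \emph{intended} subgroup topology on the product space --- is a real issue that the paper's proof also leaves unaddressed, since it simply takes $\Sigma_{X\times Y}$ to be the family of products by definition.
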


\begin{proof}
Let $ \Sigma_X $ and $ \Sigma_Y $ be neighbourhood families of $ \pi_1(X,x_0) $ and $ \pi_1(Y,y_0) $, respectively. Put $ \Sigma_{X \times Y}=\{ H \leq \pi_1(X \times Y,(x_0,y_0)) \ | \ H=H_X \times H_Y, \ H_X \in  \Sigma_X, \ H_Y \in \Sigma_Y$. For every pair $ H, K \in \Sigma_{X \times Y} $ since  $ H \cap K = H_X \times H_Y \cap K_X \times K_Y = (H_X \cap K_X) \times (H_Y \cap K_Y) \in \Sigma_X \times   \Sigma_Y = \Sigma_{X \times Y} $, then $ \Sigma_{X \times Y} $ forms a neighbourhood family on $ \pi_1(X \times Y,(x_0,y_0)) $. Now it is enough to show that $ \varphi: \pi_1^{\Sigma_X}(X,x_0) \times \pi_1^{\Sigma_Y}(Y,y_0) \rightarrow \pi_1^{\Sigma_{X \times Y}}(X \times Y,(x_0,y_0))$ and $ \varphi^{-1} $ are continuous. For every $ [\alpha] \in \pi_1^{\Sigma_{X \times Y}}(X \times Y,(x_0,y_0)) $ and $  H \in  \Sigma_{X \times Y} $ by the definition we have $ \varphi([\alpha_X]H_X,[\alpha_Y]H_Y) = [\alpha]H $, where $ \alpha_X  $ and $ \alpha_Y $ are projections of $ \alpha $ in $ X $ and $ Y$, respectively. Thus $ \varphi $ is continuous. Moreover, since for $ H_X \in \Sigma_X$ and $ H_Y \in \Sigma_Y $ with $ H_X \times H_Y = H $, $ \varphi^{-1}(H) = (H_X, H_Y) $, then $ \varphi^{-1} $ also is  continuous.
\end{proof}



\section{Some Subgroup Topologies on the Fundamental Group} \label{Sec3}

For a topological space $X$, the fundamental group $\pi_1(X, x_0)$ admits a variety of distinct natural subgroup topologies \cite{Bog,Wil}, which some of them have been studied to find some properties of $\pi_1(X, x_0)$. As an example, \textit{Spanier subgroup topology} \cite[page 12]{Wil} was introduced using the collection of all Spanier subgroups $\pi(\mathcal{U}, x_0)$ of the fundamental group $\pi_1(X, x_0)$ as the neighbourhood family $ \Sigma^S $. Recall that \cite[Page 81]{Span}, the Spanier subgroup determined by an open covering $\mathcal{U}$ of $X$ is the normal subgroup $\pi(\mathcal{U}, x_0)$ of $\pi_1(X, x_0)$ generated by the homotopy class of lollipops $\alpha * \beta * \alpha^{-1}$, where $\beta$ is a loop lying in an element of $U \in \mathcal{U}$ at $\alpha(1)$, and $\alpha$ is any path originated at $x_0$. The fundamental group equipped with the Spanier subgroup topology is denoted by $\pi_1 ^{\mathrm{Span}}(X, x_0)$.  From Lemma 5.4 of \cite{Wil}, it is clear that $\pi_1 ^{\mathrm{Span}}(X, x_0)$ is a topological group since every $\pi(\mathcal{U}, x_0) $ is a normal subgroup.

The following interesting  classical result of Spanier \cite[Section 2.5 Theorems 12,13]{Span} realized the relationship between classical covering space theory and the Spanier subgroups of the fundamental group.

\begin{theorem} \label{th31}
Let $X$ be a connected locally path connected space and $H \leq \pi_1(X, x_0)$. Then there exists  a covering projection $p:\widetilde{X}\to X$ with $p_* (\pi_1 (\widetilde{X},\tilde{x}))=H$ (or equivalently, $ H $ is a covering subgroup of $ \pi_1(X, x_0) $) if and only if there exists an open cover $\mathcal{U}$ of $X$ in which $\pi(\mathcal{U}, x_0)\leq H$, or equivalently, $ H $ is an open subgroup of $ \pi_1 ^{\mathrm{Span}}(X, x_0) $.
\end{theorem}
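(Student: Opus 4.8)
The plan is to first dispense with the final equivalence, which is immediate from the definition of the subgroup topology: a subgroup $H$ is open in $\pi_1^{\mathrm{Span}}(X,x_0)$ exactly when it contains some basic neighbourhood $\pi(\mathcal{U},x_0)$ of the identity, and conversely $\pi(\mathcal{U},x_0)\leq H$ forces $H=\bigcup_{h\in H}h\,\pi(\mathcal{U},x_0)$ to be a union of basic open cosets. So I would reduce to proving: $H$ is a covering subgroup $\iff$ $\pi(\mathcal{U},x_0)\leq H$ for some open cover $\mathcal{U}$ of $X$.

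For the forward direction, given a covering $p\colon\widetilde{X}\to X$ with $p_*\pi_1(\widetilde{X},\tilde{x})=H$, I would use local path connectedness to choose a cover $\mathcal{U}$ by path connected, evenly covered open sets, and then show each generator $[\alpha*\beta*\alpha^{-1}]$ of $\pi(\mathcal{U},x_0)$ lies in $H$: lift $\alpha$ starting at $\tilde{x}$, then lift $\beta$ from $\tilde\alpha(1)$; since $\beta$ lies in an evenly covered set, its lift stays in a single sheet on which $p$ is a homeomorphism and is therefore a loop, so $\tilde\alpha*\tilde\beta*\tilde\alpha^{-1}$ is a loop at $\tilde{x}$ with $p_*$-image $[\alpha*\beta*\alpha^{-1}]$. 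Because these lollipops are closed under conjugation in $\pi_1(X,x_0)$, the (normal) subgroup they generate is $\pi(\mathcal{U},x_0)$, which therefore sits inside $H$.

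The converse is the substantial direction. Assuming $\pi(\mathcal{U},x_0)\leq H$, I would first replace each $U\in\mathcal{U}$ by its (open) path components --- this can only shrink the Spanier subgroup --- so as to assume every $U$ is path connected. Then I would run Spanier's construction: let $\widetilde{X}$ be the set of classes $\langle\alpha\rangle$ of paths $\alpha$ from $x_0$, with $\alpha\sim\beta$ iff $\alpha(1)=\beta(1)$ and $[\alpha*\beta^{-1}]\in H$; set $p\langle\alpha\rangle=\alpha(1)$; and topologize $\widetilde{X}$ by the sets $\langle\alpha\rangle_U=\{\langle\alpha*\gamma\rangle\mid\gamma\text{ a path in }U,\ \gamma(0)=\alpha(1)\}$, with $U$ path connected open and $\alpha(1)\in U$. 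The hard part will be showing that for $U\in\mathcal{U}$ the map $p\colon\langle\alpha\rangle_U\to U$ is a bijection --- this is the only place the hypothesis is used: surjectivity comes from path connectedness of $U$, and injectivity from the fact that two paths $\gamma_1,\gamma_2$ in $U$ with the same endpoints give $\alpha*\gamma_1*\gamma_2^{-1}*\alpha^{-1}\in\pi(\mathcal{U},x_0)\leq H$, so $\langle\alpha*\gamma_1\rangle=\langle\alpha*\gamma_2\rangle$.

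Once the chart bijections are in hand, the remaining checks are routine and I would present them briskly: the $\langle\alpha\rangle_U$ form a basis; $p$ is continuous and open; for $U\in\mathcal{U}$ the preimage $p^{-1}(U)$ is a disjoint union of such basic sets, each mapped homeomorphically onto $U$, so $U$ is evenly covered and $p$ is a covering map; $\widetilde{X}$ is path connected via $t\mapsto\langle\alpha|_{[0,t]}\rangle$; and, taking $\tilde{x}=\langle c_{x_0}\rangle$, a loop $\alpha$ at $x_0$ lifts to a loop at $\tilde{x}$ iff $\langle\alpha\rangle=\tilde{x}$ iff $[\alpha]\in H$, giving $p_*\pi_1(\widetilde{X},\tilde{x})=H$. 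So the whole weight of the proof rests on the single step that $\pi(\mathcal{U},x_0)\leq H$ makes $p$ injective on charts.
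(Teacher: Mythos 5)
Your proposal is correct and takes essentially the same route as the source the paper relies on: the paper offers no proof of Theorem \ref{th31}, citing it as a classical result of Spanier (Section 2.5, Theorems 12--13), and your argument --- reducing openness in $\pi_1^{\mathrm{Span}}(X,x_0)$ to containment of some $\pi(\mathcal{U},x_0)$, lifting lollipops over evenly covered sets for the forward direction, and running Spanier's path-class construction with injectivity on charts coming from $\pi(\mathcal{U},x_0)\leq H$ for the converse --- is precisely that classical proof. Nothing further is needed.
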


 \begin{remark}
 Fischer et al. \cite{FiZa4} distinguished the notions based and unbased semilocally simply connectedness and showed that pointed topological space $(X,x_{0}) $  is unbased semilocally simply connected if and only if there exists an open covering $\mathcal{U}$ of $X$ such that $\pi(\mathcal{U}, x_0)$ is trivial. This statement can be recreated as follows.
 \end{remark}
 
 \begin{proposition} \label{pr33}
  A pointed topological space $ (X,x_0) $  is unbased semilocally simply connected if and only if $\pi_1^{\mathrm{Span}}(X, x_0)$ is discrete.
\end{proposition}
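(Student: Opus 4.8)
The plan is to reduce Proposition \ref{pr33} to the equivalence recalled in the preceding Remark, namely that $(X,x_0)$ is unbased semilocally simply connected if and only if there is an open cover $\mathcal{U}$ of $X$ with $\pi(\mathcal{U},x_0)$ trivial. Once this is in hand, the proposition is a matter of unwinding the definition of discreteness for a subgroup topology. Recall from the earlier Remark in Section \ref{Sec2} that a group $G$ with the subgroup topology determined by a neighbourhood family $\Sigma$ is discrete if and only if the trivial subgroup $1$ belongs to $\Sigma$ (equivalently $S_\Sigma = 1$ and $S_\Sigma \in \Sigma$). For $\pi_1^{\mathrm{Span}}(X,x_0)$ the neighbourhood family is $\Sigma^S = \{\pi(\mathcal{U},x_0) \mid \mathcal{U}\ \text{an open cover of}\ X\}$, so discreteness amounts precisely to the existence of some open cover $\mathcal{U}$ with $\pi(\mathcal{U},x_0) = 1$.

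Concretely, I would argue as follows. First, suppose $(X,x_0)$ is unbased semilocally simply connected. By the result of Fischer et al. quoted in the Remark, there is an open cover $\mathcal{U}$ of $X$ with $\pi(\mathcal{U},x_0)$ trivial, so $1 = \pi(\mathcal{U},x_0) \in \Sigma^S$; hence the singleton $\{1\}$ (a left coset of $\pi(\mathcal{U},x_0)$) is open in $\pi_1^{\mathrm{Span}}(X,x_0)$, and since the topology is homogeneous under left translation, every singleton is open, i.e. the topology is discrete. Conversely, if $\pi_1^{\mathrm{Span}}(X,x_0)$ is discrete, then $\{1\}$ is open, so it contains a basic open set, which must be a left coset of some $\pi(\mathcal{U},x_0) \in \Sigma^S$ contained in $\{1\}$; as cosets are nonempty this forces $\pi(\mathcal{U},x_0) = 1$ for that cover $\mathcal{U}$, and again by the Fischer et al. characterization $(X,x_0)$ is unbased semilocally simply connected.

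I do not expect a serious obstacle here: the content is essentially bookkeeping, transporting the combinatorial statement ``some $\pi(\mathcal{U},x_0)$ is trivial'' across the (already established) dictionary between the neighbourhood family $\Sigma^S$ and the Spanier topology. The one point worth stating carefully is why a basic open neighbourhood of $1$ must be a coset of a member of $\Sigma^S$ rather than an arbitrary union of such cosets — this is just the definition of a basis, together with the fact that the basic cosets through $1$ are exactly the subgroups $\pi(\mathcal{U},x_0)$ themselves. It is also harmless to phrase the whole argument directly in terms of the Remark on discreteness from Section \ref{Sec2} rather than re-deriving it. If one prefers not to invoke the Fischer et al. result as a black box, one could instead observe that $\pi(\mathcal{U},x_0)$ being trivial says precisely that every loop at $x_0$ of the form $\alpha*\beta*\alpha^{-1}$ with $\beta$ contained in a member of $\mathcal{U}$ is nullhomotopic, which is exactly the (unbased) semilocal simple connectivity condition; but since that equivalence is already attributed and recalled immediately above, citing it is cleanest.
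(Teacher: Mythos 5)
Your proposal is correct and follows essentially the same route as the paper: both directions reduce to the Fischer et al.\ characterization (triviality of some $\pi(\mathcal{U},x_0)$) together with the observation that discreteness of a subgroup topology is equivalent to the trivial subgroup lying in the neighbourhood family, with left translations spreading openness of $\{1\}$ to all singletons. No gaps.
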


\begin{proof}
If $\pi_1^{\mathrm{Span}}(X, x_0)$ is discrete, then the trivial subgroup is open in $\pi_1^{\mathrm{Span}}(X, x_0)$ and so there is an open cover $ \mathcal{U} $ of $ X $ such that $ \pi(\mathcal{U}, x_0) = 1 $, i.e, $ X $ is unbased semilocally simply connected. Conversely, if $ X $ has an open cover $ \mathcal{U} $ with $ \pi(\mathcal{U}, x_0) = 1 $, then $ \{1\} $ is open. Since $\pi_1^{\mathrm{Span}}(X, x_0)$ is a topological group, then by using of translation maps, every  $ [\alpha] \in \pi_1^{\mathrm{Span}}(X, x_0)  $ is open in $\pi_1^{\mathrm{Span}}(X, x_0)$. Therefore, $\pi_1^{\mathrm{Span}}(X, x_0)$ is discrete.
\end{proof}

Pakdaman et al. \cite{MTPUCo} introduced the concepts of coverable and semilocally Spanier spaces and showed that these notions are equivalent in the case of connected locally path connected spaces \cite[Theorem 2.8]{MTPUCo}. Note that the infinitesimal subgroup of $\pi_1^{\mathrm{Span}}(X, x_0)$ named  the Spanier group and denoted by $\pi_1^{sp}(X, x_0)$ \cite{FiZa4}. The following proposition adds another equivalent to them.

\begin{proposition} \label{pr3.3}
For a connected and locally path connected space X, the following statements are equivalent.

\begin{enumerate}
\item  $ X $ is a $ \Sigma^S $-coverable space (or coverable in the sense of \cite{MTPUCo}).
\item  $ X $ is a semilocally Spanier space.
\item  $\pi_1^{sp}(X, x_0)$ is an open subgroup of $ \pi_1^{\mathrm{Span}}(X, x_0) $.
\end{enumerate}

\end{proposition}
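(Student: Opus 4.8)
The plan is to treat this proposition as a bookkeeping statement that inserts $\Sigma^S$-coverability into the equivalence chain of Pakdaman et al. \cite{MTPUCo}, splitting it as $(1)\Leftrightarrow(3)$, which is essentially definitional, together with $(1)\Leftrightarrow(2)$, which is a citation. For $(1)\Leftrightarrow(3)$: the Spanier subgroup topology is the subgroup topology whose neighbourhood family $\Sigma^S$ is the collection of all Spanier subgroups $\pi(\mathcal{U},x_0)$, so its infinitesimal subgroup is
\[
S_{\Sigma^S}=\bigcap_{\mathcal{U}}\pi(\mathcal{U},x_0)=\pi_1^{sp}(X,x_0),
\]
the Spanier group. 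By the definition of a $\Sigma$-coverable space, $X$ is $\Sigma^S$-coverable exactly when $S_{\Sigma^S}$ is open in $\pi_1^{\mathrm{Span}}(X,x_0)$; thus $(1)$ and $(3)$ are the same assertion, and the identification of $\Sigma^S$-coverability with coverability in the sense of \cite{MTPUCo} is precisely the observation made before that definition ($\pi_1^{sp}(X,x_0)$ is a covering subgroup iff it is open in $\pi_1^{\mathrm{Span}}(X,x_0)$, which is Theorem~\ref{th31} applied with $H=\pi_1^{sp}(X,x_0)$).

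For $(1)\Leftrightarrow(2)$ I would invoke \cite[Theorem 2.8]{MTPUCo}, which for connected, locally path connected spaces identifies ``coverable'' with ``semilocally Spanier''; together with the previous paragraph this closes the cycle $(1)\Leftrightarrow(2)\Leftrightarrow(3)$.

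If one wants to avoid that citation, $(2)\Leftrightarrow(3)$ can be argued directly. Recalling from \cite{MTPUCo} that $X$ is semilocally Spanier precisely when the infimum defining the Spanier group is attained, i.e.\ $\pi_1^{sp}(X,x_0)=\pi(\mathcal{U},x_0)$ for some open cover $\mathcal{U}$, this says exactly that $\pi_1^{sp}(X,x_0)\in\Sigma^S$, which is equivalent to $\pi_1^{sp}(X,x_0)$ being an open subgroup of $\pi_1^{\mathrm{Span}}(X,x_0)$ (since an infinitesimal subgroup is open iff it belongs to its own neighbourhood family). The only point requiring a little care is the standard translation between the pointwise ``semilocally Spanier at each point'' formulation and the ``there is an open cover'' formulation, which is where local path connectedness enters: witnessing neighbourhoods can be taken path connected, and refining a cover only shrinks the associated Spanier subgroup. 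I expect no real obstacle here; the content of the proposition is simply that $\Sigma^S$-coverability is the condition that \cite{MTPUCo} already isolated.
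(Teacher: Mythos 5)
Your proposal is correct and follows essentially the same route the paper takes (the paper gives no explicit proof, but its surrounding discussion makes exactly your argument: $(1)\Leftrightarrow(3)$ is definitional once one notes $S_{\Sigma^S}=\pi_1^{sp}(X,x_0)$, the identification with coverability in the sense of \cite{MTPUCo} comes from Theorem~\ref{th31} applied to $H=\pi_1^{sp}(X,x_0)$, and $(1)\Leftrightarrow(2)$ is \cite[Theorem 2.8]{MTPUCo}). Your optional direct argument for $(2)\Leftrightarrow(3)$ via the observation that an infinitesimal subgroup is open iff it lies in its own neighbourhood family is a sound supplement, consistent with the paper's earlier remark to that effect.
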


On the other hand, Brodskiy et al. \cite[Section 3]{Brod7}  introduced another topology on the universal path space $\widetilde{X}$ using open coverings of $X$, which makes the fundamental group a topological group \cite[Proposition 5.17]{Brod7} and named it \textit{lasso topology}.

Recall from \cite[definition 4.11]{Brod} that for any topological space $X$, the lasso topology on the set $\widetilde{X}$ is defined by the basis $  N(\langle \alpha \rangle , \mathcal{U}, W)$, where $ \alpha $ is a path originated at $ x_0 $,  $W$ is a neighbourhood of the endpoint $\alpha(1)$ and $\mathcal{U}$ is an open cover of $X$. A class $\langle \gamma \rangle \in \widetilde{X}$ belongs  to $ N(\langle \alpha \rangle , \mathcal{U}, W)$ if and only if it has a representation of the form $\alpha * L * \beta$ where $[L]$ belongs to $\pi\big(\mathcal{U},\alpha(1)\big)$ and $\beta$ is a based loop in $W$ at $\alpha(1)$.

There is a bijection between the fundamental group $\pi_1(X, x_0)$ and the fibre of the base point $p^{-1}(x_0)$, where $p: \widetilde{X}\to X$ is the endpoint projection map. Therefore, the fundamental group $\pi_1(X, x_0)$ as a subspace of the universal path space $\widetilde{X}$ inherits any topology from $\widetilde{X}$. Thus, the collection of sets with the form $ N(\langle \alpha\rangle, \mathcal{U}, W)\cap p^{-1}(x_0)$ is a basis for the lasso topology on $\pi_1(X, x_0)$, which we denote it by $\pi_1^{\mathrm{lasso}} (X, x_0)$.

Brodskiy et al. \cite[Section 3]{Brod7} stated some properties of $\pi_1^{\mathrm{lasso}}(X, x_0)$ and relationships between covering subgroups and the lasso topology on the fundamental group. In the following we show that the lasso topology on the fundamental group and the Spanier subgroup topology coincide, in general.

\begin{proposition} \label{pr3.4}
Let $X$ be a topological space. The lasso topology on the fundamental group $\pi_1(X, x_0)$ coincides with the Spanier subgroup topology.
\end{proposition}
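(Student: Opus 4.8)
The plan is to show that the two topologies on $\pi_1(X,x_0)$ have equivalent bases: every basic lasso neighbourhood contains a basic Spanier neighbourhood (a coset of some $\pi(\mathcal{U},x_0)$) and vice versa. Since both topologies are homogeneous — the Spanier topology because it is a subgroup topology, the lasso topology because $\pi_1^{\mathrm{lasso}}(X,x_0)$ is a topological group by \cite[Proposition 5.17]{Brod7} — it suffices to compare the two systems of neighbourhoods of the identity element $[c_{x_0}]$ (the constant loop), and then translate.

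First I would identify the lasso neighbourhoods of the identity. A basic lasso neighbourhood of a point $\langle\alpha\rangle$ of $\widetilde X$ is $N(\langle\alpha\rangle,\mathcal{U},W)$; intersecting with the fibre $p^{-1}(x_0)$ and taking $\alpha$ to be the constant path at $x_0$, one gets the set of classes $\langle c_{x_0}*L*\beta\rangle=\langle L*\beta\rangle$ where $[L]\in\pi(\mathcal{U},x_0)$ and $\beta$ is a loop in $W$ based at $x_0$. The key observation is that, after possibly refining, a loop $\beta$ lying in $W$ contributes a class that already lies in $\pi(\mathcal{U},x_0)$ provided $W$ is a member of $\mathcal{U}$: indeed such a $\beta$ is $c_{x_0}*\beta*c_{x_0}^{-1}$, a lollipop with the stem being the constant path and the loop lying in $U=W\in\mathcal{U}$. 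So, given a basic lasso neighbourhood $N(\langle c_{x_0}\rangle,\mathcal{U},W)\cap p^{-1}(x_0)$, pass to the open cover $\mathcal{V}=\mathcal{U}\cup\{W\}$ (or, for the reverse comparison, just use $\mathcal{U}$ itself and any $W$); then $\pi(\mathcal{V},x_0)$ is a Spanier basic neighbourhood of the identity contained in the given lasso neighbourhood, since every generator $[L]*[\beta]$ with $[L]\in\pi(\mathcal{U},x_0)\subseteq\pi(\mathcal{V},x_0)$ and $\beta$ a loop in $W$ lies in $\pi(\mathcal{V},x_0)$. Conversely, $\pi(\mathcal{U},x_0)$ itself equals $N(\langle c_{x_0}\rangle,\mathcal{U},X)\cap p^{-1}(x_0)$ (take $W=X$, so the $\beta$-factor is unconstrained but can be absorbed, or more cleanly take $W$ to be any member of $\mathcal{U}$ and note that the resulting set still lies inside $\pi(\mathcal{U},x_0)$ while containing it when $\beta$ is trivial) — in any case every basic Spanier neighbourhood of the identity arises as (or contains, and is contained in a union of) basic lasso neighbourhoods. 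This shows the two neighbourhood filters at the identity generate the same topology.

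Finally I would invoke homogeneity: a set is open in $\pi_1^{\mathrm{Span}}(X,x_0)$ iff it is a union of cosets $[\gamma]\pi(\mathcal{U},x_0)$, and left translation by $[\gamma]$ is a homeomorphism for the lasso topology too, so such cosets are lasso-open; symmetrically a lasso-open set is a union of translates of basic lasso neighbourhoods of the identity, each of which is Spanier-open by the previous paragraph. Hence the identity map $\pi_1^{\mathrm{lasso}}(X,x_0)\to\pi_1^{\mathrm{Span}}(X,x_0)$ is a homeomorphism.

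The main obstacle I anticipate is the bookkeeping in matching the $\beta$-in-$W$ factor of the lasso basis with the lollipop generators of a Spanier subgroup — one must be slightly careful that the extra freedom of choosing $W$ small does not produce lasso neighbourhoods strictly finer than every Spanier coset. The resolution is exactly the refinement trick above: a loop confined to a small $W$ is itself a lollipop (with constant stem) for the refined cover $\mathcal{U}\cup\{W\}$, so it is swallowed into the Spanier subgroup of that refined cover, and conversely shrinking $W$ only shrinks within a fixed Spanier coset. Once this is seen, the equivalence of the neighbourhood filters is immediate and the rest is the standard homogeneity argument.
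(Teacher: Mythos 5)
Your route is genuinely different from the paper's. The paper works directly with arbitrary basis elements: it first observes that each coset $[\alpha]\pi(\mathcal{U},x_0)$ is itself a lasso basic set (so the lasso topology is finer than the Spanier one), and then, given $[\alpha]\in N(\langle\mu\rangle,\mathcal{U},W)\cap p^{-1}(x_0)$, it carries out an explicit conjugation computation --- writing $\alpha\simeq\mu*\eta*\lambda$ and replacing each lollipop $\delta_i*\gamma_i*\delta_i^{-1}$ of $\xi\in\pi(\mathcal{U},x_0)$ by $\lambda*\delta_i*\gamma_i*\delta_i^{-1}*\lambda^{-1}\in\pi\big(\mathcal{U},\mu(1)\big)$ --- to get $[\alpha]\pi(\mathcal{U},x_0)\subseteq N(\langle\mu\rangle,\mathcal{U},W)\cap p^{-1}(x_0)$. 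You instead reduce everything to neighbourhoods of the identity and invoke homogeneity of both topologies. That is a legitimate and structurally cleaner plan, but be aware of where the cost is hidden: a lasso basis element containing $[c_{x_0}]$ is a priori of the form $N(\langle\mu\rangle,\mathcal{U},W)\cap p^{-1}(x_0)$ with $\mu$ nonconstant, so your reduction silently uses the fact that the sets $N(\langle c_{x_0}\rangle,\mathcal{U},W)\cap p^{-1}(x_0)$ form a neighbourhood basis at the identity. That re-centering lemma is exactly the conjugation computation the paper performs; you may import it from Brodskiy et al., but you should cite it explicitly rather than just ``taking $\alpha$ to be the constant path.''

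There is also one containment stated backwards. For $\mathcal{V}=\mathcal{U}\cup\{W\}$ one has $\pi(\mathcal{U},x_0)\leq\pi(\mathcal{V},x_0)$, and your own justification (``every generator $[L][\beta]$ lies in $\pi(\mathcal{V},x_0)$'') proves $N(\langle c_{x_0}\rangle,\mathcal{U},W)\cap p^{-1}(x_0)\subseteq\pi(\mathcal{V},x_0)$, not the claimed reverse inclusion. The reverse is false in general: $\pi(\mathcal{V},x_0)$ contains lollipops $\delta*\gamma*\delta^{-1}$ whose loop part lies in $W$ but whose stem $\delta$ is arbitrary, and these need not have the form $L*\beta$ with $\beta$ a loop in $W$ based at $x_0$. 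Fortunately the statement you actually need for that direction is the much simpler $\pi(\mathcal{U},x_0)\subseteq N(\langle c_{x_0}\rangle,\mathcal{U},W)\cap p^{-1}(x_0)$, obtained by taking $\beta$ to be the constant loop; no refinement of the cover is required. The refinement (or simply choosing $W\in\mathcal{U}$) is only relevant for the opposite containment, which shows each $\pi(\mathcal{U},x_0)$ is lasso-open. With these two repairs your argument is sound and reaches the same conclusion as the paper by a more structural, less computational path.
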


\begin{proof}
Let $\beta_1$ be the basis of lasso topology on the fundamental group consists of the sets of the form $ N(\langle \alpha\rangle, \mathcal{U}, W)\cap p^{-1}(x_0)$. Since for every $[\alpha]\in \pi_1(X, x_0)$ and any open cover $\mathcal{U}$ of $X$, the set $[\alpha]\pi (\mathcal{U}, x_0)$ belongs to $\beta_1$, then the lasso topology on the fundamental group is finer than the Spanier subgroup topology. \\
Conversely, let $S$ be an open subset of $\pi_1^{\mathrm{lasso}}(X, x_0)$ and $[\alpha]\in S \subseteq \pi_1 (X, x_0)$. Then there exists an open basis neighborhood $ N(\langle \mu\rangle , \mathcal{U}, W)\cap p^{-1}(x_0)$ of $[\alpha]$ which contained in $S$. We show that $[\alpha] \pi (\mathcal{U}, x_0) \subseteq S$.
Since $[\alpha] \in N(\langle \mu \rangle, \mathcal{U}, W)\cap p^{-1}(x_0)$, then there exists $[\eta] \in \pi \big(\mathcal{U}, \mu(1)\big)$ and $\lambda: I \rightarrow W$ with $\lambda (0)=\mu(1)$ and $\lambda (1)=x_0$, such that $\alpha \simeq \mu *\eta * \lambda$. Now for any $[\xi] \in \pi(\mathcal{U}, x_0)$ with $\xi \simeq \prod^n_{i=1} \delta_i * \gamma_i * \delta _i^{-1}$ we have:
\begin{align*}
\alpha * \xi & \simeq \mu * \eta *\lambda *\delta_1 *\gamma_1 * \delta_1 ^{-1} * \delta _2 *\gamma_2* \delta_2^{-1}* \cdots * \delta _n* \gamma_n* \delta _n^{-1}\\
&\simeq \mu * \eta *\lambda *\delta_1 *\gamma_1 *\delta_1^{-1} * \lambda ^{-1} * \lambda *\delta_2 *\gamma_2 *\delta_2^{-1} *\lambda ^{-1} \\
&\quad * \cdots * \lambda * \delta_n * \gamma_n * \delta_n^{-1} * \lambda ^{-1} * \lambda.
\end{align*}
Put $ \varrho \simeq \prod^n_{i=1} \lambda * \delta_i * \gamma_i* \delta_i^{-1} * \lambda ^{-1}\in \pi \big(\mathcal{U}, \mu(1)\big)$, then
\[
\alpha* \xi \simeq \mu * \eta * \varrho *\lambda \in N(\langle \mu \rangle, \mathcal{U}, W)\cap p^{-1}(x_0) \subseteq S.
\]
Therefore $[\alpha]\pi (\mathcal{U},x_0)\subseteq N(\langle \mu\rangle, \mathcal{U}, W)\cap p^{-1}(x_0) \subseteq S$.
\end{proof}

Torabi et al. \cite[Section 3]{MTPSp} replaced open covers with path open covers of the space $ X $ in the definition of Spanier subgroups and introduced path Spanier subgroups by the same way. Recall that a path open cover $ \mathcal{V} $ of the path component of $ X $ involve $ x_0 $ is the collection of open subsets $\lbrace V_{\alpha} \ \vert \ \alpha\in{P(X,x_{0})}\rbrace$ and the path Spanier subgroup $\widetilde{\pi}(\mathcal{V}, x_{0})$ with respect to the path open cover $ \mathcal{V} $ is the subgroup of $ \pi_1(X, x_0)$ consists of all homotopy classes having representatives of the following type:\\
$$\prod_{j=1}^{n}\alpha_{j}\beta_{j}\alpha^{-1}_{j},$$
where $\alpha_{j}$'s are arbitrary path starting at $x_{0}$ and each $\beta_{j}$ is a loop inside of the open set $V_{\alpha_{j}}$ for all $j\in{\lbrace1,2,...,n\rbrace}$.

If $ \mathcal{U} $ and $ \mathcal{V} $ are two path open covers of a space $ X $, the collection $ \mathcal{W}=\lbrace U_{\alpha} \cap V_{\alpha} \ \vert \ \forall \alpha\in{P(X,x_{0}), U_{\alpha} \in \mathcal{U} \ and \ V_{\alpha} \in \mathcal{V} }\rbrace $ is a refinement of both $ \mathcal{U} $ and $ \mathcal{V} $. Thus, $\widetilde{\pi}(\mathcal{W}, x_{0}) \leq  \widetilde{\pi}(\mathcal{U}, x_{0}) \cap \widetilde{\pi}(\mathcal{V}, x_{0}) $, which shows that the collection of all path Spanier subgroups of the fundamental group forms a neighbourhood family.

\begin{definition} \label{def35}
 For a pointed space $ (X,x_0) $, let $ \Sigma^{\mathrm{P}} $ be the collection of all path Spanier subgroups of  $ \pi_1(X, x_0) $. We call the subgroup topology determined by $\Sigma^{\mathrm{P}} $ the \textit{path Spanier topology} and denote it by $ \pi_1^{\mathrm{pSpan}}(X, x_0)$.
 \end{definition}

Moreover, they showed that if a path Spanier subgroup $ \widetilde{\pi}(\mathcal{V}, x_{0})  $ is normal, then there exists an Spanier subgroup $ {\pi}(\mathcal{U}, x_{0})  $ for which $ \widetilde{\pi}(\mathcal{V}, x_{0}) = {\pi}(\mathcal{U}, x_{0}) $ \cite[Theorem 3.2]{MTPSp}. The following proposition appears as a result of this fact.

 \begin{proposition} \label{pr3.7}
For a locally path connected space $X$, $\pi_1^{\mathrm{pSpan}}(X, x_0)$ is discrete if and only if $\pi_1^{\mathrm{Span}}(X, x_0)$ is discrete.
\end{proposition}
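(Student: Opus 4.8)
The plan is to prove both implications by relating discreteness of a subgroup topology to membership of the trivial subgroup in the relevant neighbourhood family, as recorded in the Remark after Proposition~\ref{pr2.3}. Concretely, $\pi_1^{\mathrm{pSpan}}(X,x_0)$ is discrete if and only if the trivial subgroup belongs to $\Sigma^{\mathrm{P}}$, i.e. there is a path open cover $\mathcal{V}$ with $\widetilde{\pi}(\mathcal{V},x_0)=1$; similarly $\pi_1^{\mathrm{Span}}(X,x_0)$ is discrete if and only if there is an open cover $\mathcal{U}$ with $\pi(\mathcal{U},x_0)=1$. So the task reduces to showing the existence of such a $\mathcal{V}$ is equivalent to the existence of such a $\mathcal{U}$.

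The easy direction is that Spanier discreteness implies path Spanier discreteness: every ordinary open cover $\mathcal{U}$ of $X$ gives a path open cover by assigning to each path $\alpha$ an element $U_\alpha\in\mathcal{U}$ containing $\alpha(1)$ (using local path connectedness to pick $U_\alpha$ path connected if desired, though it is not strictly needed here). The resulting path Spanier subgroup is contained in $\pi(\mathcal{U},x_0)$, hence trivial when the latter is. For the converse, I would invoke \cite[Theorem 3.2]{MTPSp} as cited in the excerpt: the trivial subgroup is certainly normal in $\pi_1(X,x_0)$, so if $\widetilde{\pi}(\mathcal{V},x_0)=1$ for some path open cover $\mathcal{V}$, then there exists an ordinary open cover $\mathcal{U}$ with $\pi(\mathcal{U},x_0)=\widetilde{\pi}(\mathcal{V},x_0)=1$. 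Feeding this back through the Remark yields discreteness of $\pi_1^{\mathrm{Span}}(X,x_0)$.

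Assembling these steps: first I would state the reduction via the Remark (discreteness $\Leftrightarrow$ trivial subgroup in the family), then handle the Spanier $\Rightarrow$ path Spanier implication by the direct refinement argument above, and finally close the loop using the normality result \cite[Theorem 3.2]{MTPSp} for the path Spanier $\Rightarrow$ Spanier direction. One should also note where local path connectedness of $X$ is used — it is needed precisely so that \cite[Theorem 3.2]{MTPSp} applies (that theorem is stated for locally path connected spaces), and it guarantees path components are open so the notion of path open cover behaves well.

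The main obstacle is not really a deep one: it is making sure the bookkeeping between ordinary open covers and path open covers is clean, in particular that converting an open cover to a path open cover genuinely shrinks (or at least does not enlarge) the associated Spanier-type subgroup, and that the hypotheses of \cite[Theorem 3.2]{MTPSp} are met by the trivial subgroup. Since the trivial subgroup is automatically normal, the only real content is the cited theorem itself, so the proof is short once that machinery is in hand.
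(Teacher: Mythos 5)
Your argument is correct and follows essentially the same route as the paper: the easy direction via the observation that every open cover induces a path open cover whose path Spanier subgroup sits inside the corresponding Spanier subgroup, and the converse via \cite[Theorem 3.2]{MTPSp} applied to the (automatically normal) trivial path Spanier subgroup. The only cosmetic difference is that you route both directions explicitly through the remark characterizing discreteness by membership of the trivial subgroup in the neighbourhood family, which the paper uses implicitly.
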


 \begin{proof}
 By definition every open cover is also a path open cover, hence $\pi_1^{\mathrm{pSpan}}(X, x_0)$ is finer than $\pi_1^{\mathrm{Span}}(X, x_0)$ for any space $ X $. Therefore, $\pi_1^{\mathrm{pSpan}}(X, x_0)$ is discrete when $\pi_1^{\mathrm{Span}}(X, x_0)$ be discrete. Conversely, if $\pi_1^{\mathrm{pSpan}}(X, x_0)$ is discrete, then there is a trivial path Spanier subgroup, i.e, $ \widetilde{\pi}(\mathcal{V}, x_{0}) =1  $ and so $ \widetilde{\pi}(\mathcal{V}, x_{0})  $ is a normal subgroup. Now one can conclude from \cite[Theorem 3.2]{MTPSp} that there exists an open cover $ \mathcal{U} $ of $ X $ such that  ${\pi}(\mathcal{U}, x_{0}) = \widetilde{\pi}(\mathcal{V}, x_{0}) = 1 $. Therefore, $\pi_1^{\mathrm{Span}}(X, x_0)$ is also discrete.
 
\end{proof}

The following corollary is obtained from the combination of the above proposition and Proposition \ref{pr33}.

\begin{corollary} \label{co38}
For a locally path connected space $X$, the following statements are equivalent.
\begin{enumerate}
\item
$ X $ is unbased semilocally simply connected space.
\item
$\pi_1^{\mathrm{Span}}(X, x_0)$ is discrete.
\item
$\pi_1^{\mathrm{pSpan}}(X, x_0)$ is discrete.
\end{enumerate}

Moreover, each of the above statements implies that

\item $\pi_1^{\mathrm{Span}}(X, x_0) = \pi_1^{\mathrm{pSpan}}(X, x_0)$.

and
\item
$\widetilde\pi_1^{sp} (X,x_0) = \pi_1^{sp} (X,x_0) $.

\end{corollary}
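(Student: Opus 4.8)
The plan is to prove the chain of equivalences (1)$\Leftrightarrow$(2)$\Leftrightarrow$(3) first, and then derive the two additional conclusions from discreteness. The equivalence (1)$\Leftrightarrow$(2) is precisely Proposition \ref{pr33}, and (2)$\Leftrightarrow$(3) is Proposition \ref{pr3.7}; so for the core of the corollary there is essentially nothing to do beyond citing these two results and noting that a locally path connected space automatically satisfies the hypotheses required there. I would state this combination explicitly in one sentence.

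For the ``moreover'' part, suppose one (hence all) of (1)--(3) holds. To get $\pi_1^{\mathrm{Span}}(X,x_0) = \pi_1^{\mathrm{pSpan}}(X,x_0)$, I would argue that both topologies are discrete: (2) says $\pi_1^{\mathrm{Span}}$ is discrete, and then (3) (equivalently, the already-established equivalence) says $\pi_1^{\mathrm{pSpan}}$ is discrete, and two discrete topologies on the same underlying set are equal. This is the cheapest route; alternatively one can note that $\pi_1^{\mathrm{pSpan}}$ is always finer than $\pi_1^{\mathrm{Span}}$ (shown in the proof of Proposition \ref{pr3.7}) and discreteness of the coarser one forces equality. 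I would use the discreteness argument since it is symmetric and immediate.

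For the last conclusion, $\widetilde{\pi}_1^{sp}(X,x_0) = \pi_1^{sp}(X,x_0)$: recall $\pi_1^{sp}$ is the infinitesimal subgroup of $\pi_1^{\mathrm{Span}}$, i.e. the intersection of all Spanier subgroups, and $\widetilde{\pi}_1^{sp}$ is the infinitesimal subgroup of $\pi_1^{\mathrm{pSpan}}$, the intersection of all path Spanier subgroups. By the Remark preceding the corollary, or directly from the definition of discreteness, a subgroup topology is discrete iff the trivial subgroup lies in the neighbourhood family, which happens iff the infinitesimal subgroup is trivial. Since both topologies are discrete under our hypothesis, both infinitesimal subgroups equal the trivial subgroup, hence they are equal. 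I would phrase this as: $\pi_1^{sp}(X,x_0) = S_{\Sigma^S} = 1 = S_{\Sigma^{\mathrm{P}}} = \widetilde{\pi}_1^{sp}(X,x_0)$.

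I do not anticipate a genuine obstacle here, since the corollary is a bookkeeping consequence of the two preceding propositions; the only point requiring a word of care is making sure the locally-path-connected hypothesis is what licenses the use of Proposition \ref{pr3.7} (which carries exactly that hypothesis) and that Proposition \ref{pr33} needs no local hypothesis at all, so the equivalence (1)$\Leftrightarrow$(2) holds in full generality while (2)$\Leftrightarrow$(3) is where local path connectedness enters. The ``main obstacle,'' if any, is purely expository: stating the derivation of the two extra items cleanly without re-proving the discreteness equivalences.
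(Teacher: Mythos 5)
Your proposal is correct and follows exactly the route the paper intends: the paper derives this corollary simply by combining Proposition \ref{pr33} with Proposition \ref{pr3.7}, and your handling of the two ``moreover'' items (two discrete topologies on the same set coincide, and discreteness forces both infinitesimal subgroups to be trivial) is the natural and valid way to fill in what the paper leaves implicit. Nothing is missing.
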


Brazas \cite[Theorem 5.5]{BrazS} showed that for a locally path connected space $ X $, the map $ p: X \rightarrow Y $ is a semicovering map if and only if the image of the relative induced homomorphism $ p_*:  \pi_1(\widetilde{X}, \tilde{x}_0) \rightarrow \pi_1(X, x_0)$ is an open subgroup of $ \pi_1^{\mathrm{qtop}}(X, x_0) $, where $\pi_1^{\mathrm{qtop}}(X, x_0)$ is the fundamental group equipped with the compact-open topology inherited from the loop space by quotient map.
On the other hand, Torabi et al. \cite[Theorem 3.3]{MTPSp} stated that for a locally path connected space $ X $ every path Spanier subgroups are open in $\pi_1^{\mathrm{qtop}}(X, x_0)$.  Moreover, they showed that \cite[Corollary 3.4]{MTPSp} a subgroup $ H $ of $\pi_1^{\mathrm{qtop}}(X, x_0)$  is open if and only if there exists a path open cover $ \mathcal{V} $ of $ X $ such that $\widetilde{\pi}(\mathcal{V}, x_{0}) \leq H$. Finally, they concluded  the relationship between semicovering subgroups and path Spanier subgroups in the fundamental group as follows.

\begin{lemma} \cite[Theorem 4.1]{MTPSp} 
Let $X$ be a connected locally path connected space. A subgroup $ H $ of $\pi_1(X, x_0)$ is a semicovering subgroup if and only if there is a path open cover $ \mathcal{V} $ of $ X $ such that $\widetilde{\pi}(\mathcal{V}, x_{0}) \leq H$.
\end{lemma}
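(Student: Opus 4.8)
The plan is to prove the equivalence in two directions, using the results quoted immediately before the statement as black boxes. The key dictionary is: semicovering subgroups are exactly the open subgroups of $\pi_1^{\mathrm{qtop}}(X,x_0)$ (Brazas, \cite[Theorem 5.5]{BrazS}), and openness in $\pi_1^{\mathrm{qtop}}(X,x_0)$ is precisely characterized by containing some path Spanier subgroup (Torabi et al., \cite[Corollary 3.4]{MTPSp}). So the whole statement is essentially a concatenation of these two facts, and the proof should be short.

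First I would treat the forward direction. Suppose $H\leq\pi_1(X,x_0)$ is a semicovering subgroup, so $H=p_*\pi_1(\widetilde{X},\tilde{x}_0)$ for some semicovering $p:(\widetilde{X},\tilde{x}_0)\to(X,x_0)$. By Brazas \cite[Theorem 5.5]{BrazS}, since $X$ is locally path connected, $H$ is an open subgroup of $\pi_1^{\mathrm{qtop}}(X,x_0)$. Then by \cite[Corollary 3.4]{MTPSp} there is a path open cover $\mathcal{V}$ of $X$ with $\widetilde{\pi}(\mathcal{V},x_0)\leq H$, which is exactly the desired conclusion.

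For the converse, suppose there is a path open cover $\mathcal{V}$ with $\widetilde{\pi}(\mathcal{V},x_0)\leq H$. By \cite[Theorem 3.3]{MTPSp}, every path Spanier subgroup is open in $\pi_1^{\mathrm{qtop}}(X,x_0)$, so $\widetilde{\pi}(\mathcal{V},x_0)$ is open; since it is contained in $H$ and a subgroup containing an open subgroup is a union of its cosets and hence open, $H$ is open in $\pi_1^{\mathrm{qtop}}(X,x_0)$ as well. Invoking Brazas \cite[Theorem 5.5]{BrazS} once more (again using local path connectedness, and connectedness of $X$ to identify the subgroup with the image of an actual semicovering of a connected space), $H$ is a semicovering subgroup of $\pi_1(X,x_0)$, completing the proof.

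There is no serious obstacle here: the proposition is a bookkeeping corollary that packages \cite[Theorem 3.3]{MTPSp}, \cite[Corollary 3.4]{MTPSp}, and \cite[Theorem 5.5]{BrazS}. The only points requiring a word of care are (i) making sure the hypotheses of Brazas's theorem — \emph{connected} and \emph{locally path connected} $X$ — are exactly what we have assumed, and (ii) noting, as the paper already does in Section~\ref{Sec2}, that a subgroup of a (left) topological group containing an open subgroup is itself open, which is what lets us pass from ``$\widetilde{\pi}(\mathcal{V},x_0)\leq H$ with $\widetilde{\pi}(\mathcal{V},x_0)$ open'' to ``$H$ open.'' If one prefers, the converse can instead be phrased directly via \cite[Corollary 3.4]{MTPSp}, which already states that $H$ is open in $\pi_1^{\mathrm{qtop}}(X,x_0)$ iff some path Spanier subgroup lies in $H$, so that the two cited corollaries do all the work and only \cite[Theorem 5.5]{BrazS} need be invoked to translate ``open'' into ``semicovering subgroup.''
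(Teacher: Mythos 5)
Your proposal is correct and follows exactly the route the paper intends: the lemma is quoted from \cite[Theorem 4.1]{MTPSp} as the direct concatenation of Brazas's characterization of semicovering subgroups as the open subgroups of $\pi_1^{\mathrm{qtop}}(X,x_0)$ \cite[Theorem 5.5]{BrazS} with the characterization of such open subgroups as those containing a path Spanier subgroup \cite[Theorem 3.3, Corollary 3.4]{MTPSp}, which is precisely the bookkeeping you carry out. Both directions and the hypotheses are handled correctly, so there is nothing to add.
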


The following proposition is the immediate consequence of Definition \ref{def35} and the above lemma.

 \begin{proposition}
 For a locally path connected space $ X $ a subgroup $ H $ of the fundamental group $\pi_1(X, x_0)$ is a semicovering subgroup if and only if $ H $ is open in $\pi_1^{\mathrm{pSpan}}(X, x_0)$.
 \end{proposition}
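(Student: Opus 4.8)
The plan is to deduce the proposition directly by combining Definition~\ref{def35} with the cited Lemma (Theorem 4.1 of \cite{MTPSp}), since the two essentially say the same thing once the subgroup topology is unpacked. First I would recall that, by the general theory of subgroup topologies reviewed in Section~\ref{Sec2}, a subgroup $H$ of $\pi_1(X,x_0)$ is open in $\pi_1^{\mathrm{pSpan}}(X,x_0)$ precisely when $H$ contains some member of the neighbourhood family $\Sigma^{\mathrm{P}}$, that is, when there exists a path open cover $\mathcal V$ of $X$ with $\widetilde\pi(\mathcal V,x_0)\le H$. Indeed, if $H$ contains such a $\widetilde\pi(\mathcal V,x_0)$, then $H$ is a union of left cosets of $\widetilde\pi(\mathcal V,x_0)$, each of which is a basic open set, so $H$ is open; conversely, if $H$ is open then it is a neighbourhood of the identity, so some basic open set $g\,\widetilde\pi(\mathcal V,x_0)$ containing $1$ lies in $H$, and such a coset equals $\widetilde\pi(\mathcal V,x_0)$ itself.

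With that equivalence in hand, the proof reduces to a one-line invocation: by the Lemma, for a connected locally path connected $X$, a subgroup $H\le\pi_1(X,x_0)$ is a semicovering subgroup if and only if there is a path open cover $\mathcal V$ with $\widetilde\pi(\mathcal V,x_0)\le H$, and by the previous paragraph the latter condition is exactly openness of $H$ in $\pi_1^{\mathrm{pSpan}}(X,x_0)$. So I would write the proof as: \emph{``By the above Lemma, $H$ is a semicovering subgroup if and only if there exists a path open cover $\mathcal V$ of $X$ with $\widetilde\pi(\mathcal V,x_0)\le H$. Since $\Sigma^{\mathrm{P}}$ is the neighbourhood family defining $\pi_1^{\mathrm{pSpan}}(X,x_0)$, a subgroup $H$ is open in this topology if and only if it contains a member of $\Sigma^{\mathrm{P}}$, i.e.\ if and only if there is a path open cover $\mathcal V$ with $\widetilde\pi(\mathcal V,x_0)\le H$. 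Combining these gives the claim.''}

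One subtlety worth flagging: the cited Lemma is stated for \emph{connected} locally path connected spaces, whereas the proposition as stated only says ``locally path connected.'' I would either add the connectedness hypothesis for safety, or restrict attention to the path component of $x_0$, noting that semicovering subgroups and path Spanier subgroups only see that component anyway. The main (and essentially only) obstacle is therefore not a deep one: it is just the bookkeeping verification that ``open in the subgroup topology determined by $\Sigma^{\mathrm{P}}$'' unwinds to ``contains some $\widetilde\pi(\mathcal V,x_0)$,'' which is immediate from the definition of a subgroup topology via left cosets of a neighbourhood family. Everything substantive has already been done in \cite{MTPSp,BrazS}; this proposition is a reformulation packaging their results into the language of the path Spanier topology.
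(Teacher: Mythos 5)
Your proposal is correct and is exactly the argument the paper intends: the paper states the proposition as an ``immediate consequence'' of Definition~\ref{def35} and the cited lemma, and your unwinding of ``open in the subgroup topology'' as ``contains some $\widetilde{\pi}(\mathcal{V},x_0)$'' is precisely the missing bookkeeping step. Your remark about the connectedness hypothesis is a fair observation but does not change the substance.
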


Wilkins \cite[Lemma 5.4]{Wil} showed that a group $ G $ with the subgroup topology determined by a neighbourhood family $ \Sigma $  is a topological group when all subgroups in $\Sigma$ are normal.
Although for an arbitrary path open cover $ \mathcal{V} $ of $ X $, the path Spanier subgroup $ \widetilde{\pi}(\mathcal{V}, x_{0})  $ may not be a normal subgroup, in general \cite[Theorem 3.2]{MTPSp}, the following proposition shows that $\pi_1^{\mathrm{pSpan}}(X, x_0)$ is a topological group.

\begin{proposition} \label{pr3.8}
For any space $X$, the fundamental group $\pi_1^{\mathrm{pSpan}}(X, x_0)$ equipped with the path Spanier topology is a topological group.
\end{proposition}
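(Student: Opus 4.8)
The plan is to reduce the statement to Proposition~\ref{pr2.1}. Since $\pi_1^{\mathrm{pSpan}}(X,x_0)$ carries the subgroup topology determined by $\Sigma^{\mathrm P}$ (Definition~\ref{def35}), all left translations are automatically continuous, so it suffices to show that every right translation $r_{[\gamma]}\colon[\alpha]\mapsto[\alpha][\gamma]=[\alpha\ast\gamma]$, $[\gamma]\in\pi_1(X,x_0)$, is continuous. Because the topology is homogeneous with the cosets $[\alpha]\widetilde{\pi}(\mathcal V,x_{0})$ as a base, this is equivalent to the following stability property of the neighbourhood family: for every path open cover $\mathcal V$ of $X$ and every loop $\gamma$ at $x_0$ there is a path open cover $\mathcal W$ with
\[
[\gamma]^{-1}\,\widetilde{\pi}(\mathcal W,x_{0})\,[\gamma]\ \subseteq\ \widetilde{\pi}(\mathcal V,x_{0}),
\]
equivalently $\widetilde{\pi}(\mathcal W,x_{0})[\gamma]\subseteq[\gamma]\widetilde{\pi}(\mathcal V,x_{0})$. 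Granting this, $r_{[\gamma]}\big([\alpha]\widetilde{\pi}(\mathcal W,x_{0})\big)\subseteq[\alpha\ast\gamma]\widetilde{\pi}(\mathcal V,x_{0})$, which gives continuity of $r_{[\gamma]}$ at the arbitrary point $[\alpha]$.

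The construction of $\mathcal W$ is the heart of the argument. Fix $\mathcal V=\{V_\alpha\mid\alpha\in P(X,x_0)\}$ and a loop $\gamma$ representing $[\gamma]$. The key observation is that the index set $P(X,x_0)$ of a path open cover is itself stable under precomposition with $\gamma^{-1}$, so I define a new path open cover $\mathcal W=\{W_\alpha\mid\alpha\in P(X,x_0)\}$ by
\[
W_\alpha\ :=\ V_{\gamma^{-1}\ast\alpha},
\]
where $\gamma^{-1}\ast\alpha\in P(X,x_0)$ is the concatenation (well defined since $\gamma^{-1}$ is a loop at $x_0$), and note $(\gamma^{-1}\ast\alpha)(1)=\alpha(1)$. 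Thus each $W_\alpha=V_{\gamma^{-1}\ast\alpha}$ is an open set containing $\alpha(1)$, so $\mathcal W$ is a legitimate path open cover of the path component of $x_0$.

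Next comes the computation. Take $[\xi]\in\widetilde{\pi}(\mathcal W,x_{0})$ with a representative $\xi\simeq\prod_{j=1}^{n}\alpha_j\ast\beta_j\ast\alpha_j^{-1}$, each $\alpha_j\in P(X,x_0)$ and each $\beta_j$ a loop inside $W_{\alpha_j}=V_{\gamma^{-1}\ast\alpha_j}$ based at $\alpha_j(1)=(\gamma^{-1}\ast\alpha_j)(1)$. Inserting cancelling copies of $\gamma\ast\gamma^{-1}$ between the consecutive lollipop factors, exactly the manipulation used in the proof of Proposition~\ref{pr3.4}, yields
\[
\gamma^{-1}\ast\xi\ast\gamma\ \simeq\ \prod_{j=1}^{n}\big(\gamma^{-1}\ast\alpha_j\big)\ast\beta_j\ast\big(\gamma^{-1}\ast\alpha_j\big)^{-1}.
\]
Each factor on the right is a lollipop whose stem $\gamma^{-1}\ast\alpha_j$ lies in $P(X,x_0)$ and whose loop $\beta_j$ lies in $V_{\gamma^{-1}\ast\alpha_j}$, so the product represents an element of $\widetilde{\pi}(\mathcal V,x_{0})$. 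Hence $[\gamma]^{-1}\widetilde{\pi}(\mathcal W,x_{0})[\gamma]\subseteq\widetilde{\pi}(\mathcal V,x_{0})$, establishing the displayed inclusion; continuity of every $r_{[\gamma]}$ follows, and Proposition~\ref{pr2.1} then gives that $\pi_1^{\mathrm{pSpan}}(X,x_0)$ is a topological group.

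The substantive point, and the reason one cannot simply invoke Wilkins's normality criterion \cite[Lemma 5.4]{Wil}, is that an individual path Spanier subgroup $\widetilde{\pi}(\mathcal V,x_{0})$ need not be normal (\cite[Theorem 3.2]{MTPSp}), so $\Sigma^{\mathrm P}$ need not be conjugation-invariant literally; it is only conjugation-invariant \emph{up to refinement}, and this is forced precisely by the extra flexibility path open covers carry over ordinary open covers, via the reindexing $\alpha\mapsto\gamma^{-1}\ast\alpha$. I expect the only places requiring care to be the basepoint/endpoint bookkeeping in the concatenations $\gamma^{-1}\ast\alpha_j$ and the verification that inserting $\gamma\ast\gamma^{-1}$ respects homotopy rel endpoints, both routine; the genuine idea is the choice of $\mathcal W$.
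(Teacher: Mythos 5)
Your proof is correct and follows essentially the same route as the paper's: both reduce to Proposition~\ref{pr2.1} and establish continuity of right translations by reindexing the path open cover via $\alpha\mapsto\gamma^{-1}\ast\alpha$ and then conjugating each lollipop. The only cosmetic difference is that the paper takes $W_{\alpha}=V_{\alpha}\cap V_{\gamma^{-1}\ast\alpha}$ (so that $\mathcal{W}$ is also a refinement of $\mathcal{V}$), whereas you take $W_{\alpha}=V_{\gamma^{-1}\ast\alpha}$, which already suffices for the conjugation inclusion you actually use.
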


\begin{proof}
Since $\pi_1^{\mathrm{pSpan}}(X, x_0)$ is a subgroup topology, by Proposition \ref{pr2.1} it is enough to show that right translation maps are continuous. Let $ [\alpha] \in \pi_1(X, x_0) $ and $ r_{\alpha}: \pi_1^{\mathrm{pSpan}}(X, x_0) \rightarrow \pi_1^{\mathrm{pSpan}}(X, x_0) $ with $ r_{\alpha}([\beta] ) = [\beta*\alpha] $ for any $ [\beta] \in \pi_1(X, x_0) $ be the right translation map with respect to $[\alpha]$. If $  \mathcal{V} $ is an arbitrary path open cover of $ X $, then $ [\beta*\alpha]\widetilde{\pi}(\mathcal{V}, x_{0})  $ is a basis open neighbourhood of $\pi_1^{\mathrm{pSpan}}(X, x_0)$ at $ [\beta*\alpha] $. By definition, for any path $ \gamma \in P(X,x_0) $ there is a $ V_{\gamma} \in \mathcal{V}  $ with $ \gamma(1) \in V_{\gamma} $. Put $ W_{\gamma}=V_{\gamma} \cap V_{\alpha^{-1}*\gamma} $, then the collection $ \mathcal{W}= \lbrace W_{\gamma} \ \vert \ \gamma \in P(X,x_{0}) \rbrace $ is also a path open cover of $ X $, which is a refinement of $ \mathcal{V} $. Thus, as an immediate consequence of the definition of path Spanier subgroups, we have: \\
$$   \widetilde{\pi}(\mathcal{W}, x_{0}) \subseteq \widetilde{\pi}(\mathcal{V}, x_{0}).   \ \ \ \ \ \ \ (*) $$

Let $[\prod_{j=1}^{n}\gamma_{j}*\delta_{j}*\gamma^{-1}_{j}] \in \widetilde{\pi}(\mathcal{W}, x_{0})$ be an arbitrary homotopy class of a product of lollipops in $ \widetilde{\pi}(\mathcal{W}, x_{0}) $. For the map $ r_{\alpha} $ we have:
$$  r_{\alpha}([\beta*\prod_{j=1}^{n}\gamma_{j}*\delta_{j}*\gamma^{-1}_{j}])$$
 $$= r_{\alpha}([\beta*\alpha*\prod_{j=1}^{n}(\alpha^{-1}*\gamma_{j}*\delta_{j}*\gamma^{-1}_{j}*\alpha)*\alpha^{-1}])$$
  $$ = [\beta*\alpha*\prod_{j=1}^{n}(\alpha^{-1}*\gamma_{j}*\delta_{j}*\gamma^{-1}_{j}*\alpha)*\alpha^{-1}* \alpha ]$$
  $$ = [\beta*\alpha*\prod_{j=1}^{n}(\alpha^{-1}*\gamma_{j}*\delta_{j}*\gamma^{-1}_{j}*\alpha)] \in [\beta*\alpha] \widetilde{\pi}(\mathcal{W}, x_{0}). $$
  Therefore, from $ (*) $ we have
  $$ r_{\alpha}([\beta] \widetilde{\pi}(\mathcal{W}, x_{0})) \subseteq [\beta*\alpha] \widetilde{\pi}(\mathcal{V}, x_{0}), $$ which shows that $ r_{\alpha} $ is a continuous map.
\end{proof}

Note that, for a locally path connected space $ X $, open subgroups of $\pi_1^{\mathrm{pSpan}}(X, x_0)$ and $\pi_1^{\mathrm{qtop}}(X, x_0)$ coincide, but it may not hold, in general. As an example, consider Figure 1 of \cite{VirZaCam} for which $\pi_1^{\mathrm{qtop}}(X, x_0)$ is not discrete and so the trivial subgroup is not open in $\pi_1^{\mathrm{qtop}}(X, x_0)$. On the other hand, since the space is semilocally simply connected, $\pi_1^{\mathrm{pSpan}}(X, x_0)$ is discrete by Proposition \ref{pr3.7}. Then the trivial subgroup is open in $\pi_1^{\mathrm{pSpan}}(X, x_0)$.

For a locally path connected space $ X $, let $ B $ be an open subset of $\pi_1^{\mathrm{pSpan}}(X, x_0)$ and $ [\beta]\in B $. From the definition, there exists a path open cover $ \mathcal{V} $ of $ X $  such that $ [\beta]\widetilde{\pi}(\mathcal{V}, x_{0}) \subseteq B $. Recall from \cite[Theorem 3.3]{MTPSp} that $ \widetilde{\pi}(\mathcal{V}, x_{0})  $ and then $ [\beta]\widetilde{\pi}(\mathcal{V}, x_{0})  $ are open subsets of $\pi_1^{\mathrm{qtop}}(X, x_0)$. Hence $  B $ is an open subset of $\pi_1^{\mathrm{qtop}}(X, x_0)$. Therefore, $\pi_1^{\mathrm{qtop}}(X, x_0)$ is finer than $\pi_1^{\mathrm{pSpan}}(X, x_0)$.
Clearly, this result holds for any topology on the fundamental group which makes it a left topological group and its open subgroups are coincide with open subgroups of $\pi_1^{\mathrm{qtop}}(X, x_0)$.

\begin{proposition}  \label{pr310}
Let $ (X,x_0) $ be a locally path connected space. If $ \pi_1^{\mathrm{*}}(X, x_0) $ is a left topological group on the fundamental group in which its open subgroups are coincide with open subgroups of $\pi_1^{\mathrm{qtop}}(X, x_0)$, then $\pi_1^{\mathrm{*}}(X, x_0)$ is finer than $ \pi_1^{\mathrm{pSpan}}(X, x_0) $.
\end{proposition}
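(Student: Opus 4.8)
The plan is to run the same argument that the excerpt already carries out for $\pi_1^{\mathrm{qtop}}(X,x_0)$ in the paragraph immediately preceding the statement, but with $\pi_1^{\mathrm{qtop}}(X,x_0)$ replaced by the abstract topology $\pi_1^{*}(X,x_0)$ and with the standing hypothesis used to transport openness of subgroups from one topology to the other. So I would fix an arbitrary open set $B$ of $\pi_1^{\mathrm{pSpan}}(X,x_0)$ and a point $[\beta]\in B$. By Definition \ref{def35}, the cosets $[\beta]\widetilde{\pi}(\mathcal{V},x_0)$ with $\mathcal{V}$ a path open cover of $X$ form a neighbourhood basis at $[\beta]$, so there is a path open cover $\mathcal{V}$ of $X$ with $[\beta]\widetilde{\pi}(\mathcal{V},x_0)\subseteq B$. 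The goal is to show that $[\beta]\widetilde{\pi}(\mathcal{V},x_0)$ is open in $\pi_1^{*}(X,x_0)$, which then exhibits $B$ as a $\pi_1^{*}$-neighbourhood of each of its points.

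The second step is to observe that $\widetilde{\pi}(\mathcal{V},x_0)$ is a genuine subgroup of $\pi_1(X,x_0)$ which, by \cite[Theorem 3.3]{MTPSp} and the local path connectedness of $X$, is an open subgroup of $\pi_1^{\mathrm{qtop}}(X,x_0)$. By the hypothesis of the proposition, the open subgroups of $\pi_1^{*}(X,x_0)$ are precisely the open subgroups of $\pi_1^{\mathrm{qtop}}(X,x_0)$, so $\widetilde{\pi}(\mathcal{V},x_0)$ is also an open subgroup of $\pi_1^{*}(X,x_0)$. The third step uses that $\pi_1^{*}(X,x_0)$ is a left topological group: left translation by $[\beta]$ is a self-homeomorphism of $\pi_1^{*}(X,x_0)$, hence it carries the open set $\widetilde{\pi}(\mathcal{V},x_0)$ to the open set $[\beta]\widetilde{\pi}(\mathcal{V},x_0)$. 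Thus every $[\beta]\in B$ admits an open $\pi_1^{*}$-neighbourhood contained in $B$, so $B$ is open in $\pi_1^{*}(X,x_0)$; since $B$ was an arbitrary $\pi_1^{\mathrm{pSpan}}$-open set, $\pi_1^{*}(X,x_0)$ is finer than $\pi_1^{\mathrm{pSpan}}(X,x_0)$.

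I do not expect a real obstacle here, since the argument is essentially a verbatim abstraction of the computation the excerpt performs for $\pi_1^{\mathrm{qtop}}$. The only two points that need to be stated carefully are: (i) that the hypothesis on coincidence of open subgroups is being applied to an actual subgroup, namely $\widetilde{\pi}(\mathcal{V},x_0)$, so its invocation is legitimate; and (ii) that being merely a \emph{left} topological group already suffices to conclude that a coset of an open subgroup is open, because left translations are homeomorphisms in that setting. Local path connectedness of $X$ enters only once, through \cite[Theorem 3.3]{MTPSp}.
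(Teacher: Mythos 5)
Your argument is correct and is essentially the paper's own: the paragraph preceding the proposition carries out exactly this computation for $\pi_1^{\mathrm{qtop}}(X,x_0)$ and then asserts the abstraction to any left topological group whose open subgroups agree with those of $\pi_1^{\mathrm{qtop}}(X,x_0)$, which is precisely what you spell out. The two points you flag (applying the hypothesis to the subgroup $\widetilde{\pi}(\mathcal{V},x_0)$, and using only left translations being homeomorphisms) are indeed the only things needing care, and you handle both correctly.
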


Recall that Brazas introduced in \cite{BrazT} the finest topology on $\pi_{1}(X,x_{0})$ such that $\pi: \Omega(X,x_{0})\rightarrow \pi_{1}(X,x_{0})$ is continuous and $\pi_{1}(X,x_{0})$ is a topological group. The fundamental group with this topology is denoted by $\pi_1^{\mathrm{\tau}}(X, x_0)$.
Also he showed \cite[Proposition 3.16]{BrazT} that for any space $ X $, $\pi_1^{\mathrm{\tau}}(X, x_0)$ and $\pi_1^{\mathrm{qtop}}(X, x_0)$ have the same open subgroups. The following corollary is an immediate consequence of the above proposition and Proposition 3.16 from \cite{BrazT}.

\begin{corollary}
If $X$ is a locally path connected space, then $\pi_1^{\mathrm{pSpan}}(X, x_0)$ is coarser than $\pi_1^{\mathrm{\tau}}(X, x_0)$.
\end{corollary}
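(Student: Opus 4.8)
The plan is to read this off immediately from Proposition \ref{pr310}, applied to the topological group $\pi_1^{\tau}(X,x_0)$. The first thing I would do is verify that $\pi_1^{\tau}(X,x_0)$ satisfies the two hypotheses of Proposition \ref{pr310}. By Brazas's construction \cite{BrazT}, $\pi_1^{\tau}(X,x_0)$ is by definition a topological group; in particular all of its left translation maps are homeomorphisms, so it is a left topological group on the fundamental group in the sense required there. Moreover, Proposition 3.16 of \cite{BrazT} asserts that $\pi_1^{\tau}(X,x_0)$ and $\pi_1^{\mathrm{qtop}}(X,x_0)$ have exactly the same open subgroups, which is precisely the second hypothesis of Proposition \ref{pr310} with $\pi_1^{*}(X,x_0)=\pi_1^{\tau}(X,x_0)$. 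Applying Proposition \ref{pr310} then gives that $\pi_1^{\tau}(X,x_0)$ is finer than $\pi_1^{\mathrm{pSpan}}(X,x_0)$, i.e. $\pi_1^{\mathrm{pSpan}}(X,x_0)$ is coarser than $\pi_1^{\tau}(X,x_0)$, which is the assertion.

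If one prefers to spell the argument out directly rather than quote Proposition \ref{pr310}, it runs as follows. A base for $\pi_1^{\mathrm{pSpan}}(X,x_0)$ is given by the cosets $[\beta]\widetilde{\pi}(\mathcal{V},x_0)$ with $\mathcal{V}$ ranging over path open covers of $X$ and $[\beta]\in\pi_1(X,x_0)$, so it suffices to show each such coset is open in $\pi_1^{\tau}(X,x_0)$. By \cite[Theorem 3.3]{MTPSp} each path Spanier subgroup $\widetilde{\pi}(\mathcal{V},x_0)$ is open in $\pi_1^{\mathrm{qtop}}(X,x_0)$, hence open in $\pi_1^{\tau}(X,x_0)$ by Proposition 3.16 of \cite{BrazT}; since $\pi_1^{\tau}(X,x_0)$ is a topological group, left translation by $[\beta]$ is a homeomorphism, so $[\beta]\widetilde{\pi}(\mathcal{V},x_0)$ is open in $\pi_1^{\tau}(X,x_0)$ as well. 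Local path connectedness is used only to invoke \cite[Theorem 3.3]{MTPSp}.

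Since every step is either a citation or an immediate consequence of the definitions, there is no genuine technical obstacle here; the only points that deserve a moment's attention are keeping straight the ``finer/coarser'' direction and noting that ``topological group'' automatically entails ``left topological group'', so that Proposition \ref{pr310} does apply.
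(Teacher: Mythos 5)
Your proposal is correct and follows exactly the paper's route: the corollary is stated there as an immediate consequence of Proposition \ref{pr310} applied to $\pi_1^{\tau}(X,x_0)$, using Proposition 3.16 of \cite{BrazT} to match open subgroups with those of $\pi_1^{\mathrm{qtop}}(X,x_0)$. Your spelled-out second paragraph is just the argument preceding Proposition \ref{pr310} made explicit, so nothing is missing.
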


 It seems interesting to find the spaces in which $ qtop$-topology and path Spanier topology coincide on the fundamental group. In such spaces, the $ qtop$-topology can be interpreted as a subgroup topology. The following Theorem introduce a class of this spaces.

\begin{theorem}
Let $ X $ be a locally path connected and semilocally small generated space, then $\pi_1^{\mathrm{qtop}}(X, x_0) = \pi_1^{\mathrm{pSpan}}(X, x_0)$.
\end{theorem}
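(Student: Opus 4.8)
The plan is to show these two topologies on $\pi_1(X,x_0)$ are each finer than the other. One direction is already available for locally path connected $X$: that $\pi_1^{\mathrm{qtop}}(X,x_0)$ is finer than $\pi_1^{\mathrm{pSpan}}(X,x_0)$ is the content of the discussion preceding Proposition \ref{pr310}, and it also follows from Proposition \ref{pr310} applied to $\pi_1^{*}(X,x_0)=\pi_1^{\mathrm{qtop}}(X,x_0)$, which is a (quasitopological, hence left topological) group whose open subgroups are tautologically the open subgroups of $\pi_1^{\mathrm{qtop}}(X,x_0)$. So the whole content is the reverse inclusion, that every $\mathrm{qtop}$-open set is $\mathrm{pSpan}$-open. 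For this I would first reduce, using that $\pi_1^{\mathrm{qtop}}(X,x_0)$ is a quasitopological group so that left translations are homeomorphisms, to the claim that every $\mathrm{qtop}$-open neighbourhood $U$ of the trivial class $1$ contains a path Spanier subgroup $\widetilde{\pi}(\mathcal{V},x_{0})$. Granting the claim, for any $\mathrm{qtop}$-open $W$ and any $[\gamma]\in W$ the set $[\gamma]^{-1}W$ is a $\mathrm{qtop}$-open neighbourhood of $1$, hence contains some $\widetilde{\pi}(\mathcal{V},x_{0})$, and then $[\gamma]\widetilde{\pi}(\mathcal{V},x_{0})\subseteq W$ is a $\mathrm{pSpan}$-basic open set containing $[\gamma]$; so $W$ is $\mathrm{pSpan}$-open, and with the first direction this gives the theorem.

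To prove the claim I would use the hypothesis in the form: since $X$ is semilocally small generated, every point $p$ of the path component of $x_0$ has an open neighbourhood $U_p$ such that every loop in $U_p$ based at $p$ is a small loop of $X$ at $p$, that is, path-homotopic rel $p$ to loops whose image lies in any prescribed neighbourhood of $p$. Putting $V_\alpha:=U_{\alpha(1)}$ for $\alpha\in P(X,x_0)$ yields a path open cover $\mathcal{V}=\{V_\alpha\}$ in the sense of Definition \ref{def35}. Writing $q\colon \Omega(X,x_0)\to\pi_1^{\mathrm{qtop}}(X,x_0)$ for the quotient map, $q^{-1}(U)$ is open in the compact--open topology and is a union of path-homotopy classes, so it contains every nullhomotopic loop. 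The plan is then to show directly that $\widetilde{\pi}(\mathcal{V},x_{0})\subseteq U$.

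Given a typical element $[\prod_{j=1}^{n}\alpha_{j}*\beta_{j}*\alpha_{j}^{-1}]$ of $\widetilde{\pi}(\mathcal{V},x_{0})$, each $\beta_j$ is a loop in $V_{\alpha_j}=U_{\alpha_j(1)}$ at $\alpha_j(1)$, hence a small loop of $X$ at $\alpha_j(1)$. By continuity of path concatenation in the compact--open topology, the map $F\colon \prod_{j=1}^{n}\Omega(X,\alpha_j(1))\to\Omega(X,x_0)$ given by $F(\delta_1,\dots,\delta_n)=(\alpha_1*\delta_1*\alpha_1^{-1})*\cdots*(\alpha_n*\delta_n*\alpha_n^{-1})$ is continuous, and it carries the tuple of constant loops to a nullhomotopic loop (each factor $\alpha_j*c_{\alpha_j(1)}*\alpha_j^{-1}$ being nullhomotopic rel $x_0$), which therefore lies in the open set $q^{-1}(U)$. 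Hence there are open neighbourhoods $W_j\ni\alpha_j(1)$ such that $F(\delta_1,\dots,\delta_n)\in q^{-1}(U)$ whenever each $\delta_j$ is a loop at $\alpha_j(1)$ with $\delta_j(I)\subseteq W_j$ --- using only that every basic compact--open neighbourhood of a constant loop $c_p$ contains all loops whose image lies in a suitable open neighbourhood of $p$. Finally, because each $\beta_j$ is small at $\alpha_j(1)$ it is path-homotopic rel $\alpha_j(1)$ to such a $\delta_j$, so $\prod_{j=1}^{n}\alpha_{j}*\beta_{j}*\alpha_{j}^{-1}\simeq F(\delta_1,\dots,\delta_n)\in q^{-1}(U)$, giving $[\prod_{j=1}^{n}\alpha_{j}*\beta_{j}*\alpha_{j}^{-1}]\in U$, as needed.

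The step I expect to be the real obstacle is this last one: since $U$ need not be a subgroup, it is not enough to place each single lollipop in $U$, and one is forced to control arbitrary finite products, hence the multivariable map $F$ and the simultaneous choice of the $W_j$; the semilocally small generated hypothesis enters exactly here, to thin the loops $\beta_j$ down inside the members of $\mathcal{V}$ while staying in a fixed small neighbourhood. (Equivalently, one could package the claim as saying that under the hypothesis $\pi_1^{\mathrm{qtop}}(X,x_0)$ has a neighbourhood base at $1$ consisting of open subgroups, after which equality with $\pi_1^{\mathrm{pSpan}}(X,x_0)$ is immediate from \cite[Corollary 3.4]{MTPSp} together with the fact that $\pi_1^{\mathrm{pSpan}}(X,x_0)$ is the subgroup topology determined by the path Spanier subgroups.) The remaining points, which I regard as routine, are the continuity of concatenation on the possibly non-locally-compact loop space and the verification that $\{V_\alpha\}$ genuinely is a path open cover.
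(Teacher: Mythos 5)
Your overall architecture is the same as the paper's: the easy direction via Proposition \ref{pr310} (or the discussion preceding it), and the hard direction reduced, by left translation in the quasitopological group $\pi_1^{\mathrm{qtop}}(X,x_0)$, to showing that every qtop-open neighbourhood $U$ of the identity contains a path Spanier subgroup. Where you genuinely differ is in how that claim is established. The paper disposes of it in two citations: by \cite[Theorem 2.2]{MTPSG} the small generated subgroup $\pi_1^{sg}(X,x_0)$ is contained in every qtop-open neighbourhood of $1$, and by \cite[Theorem 3.8]{MTPSG} the semilocally small generated hypothesis makes $\pi_1^{sg}(X,x_0)$ a qtop-open subgroup, hence (open subgroups of qtop and pSpan coinciding for locally path connected $X$) pSpan-open; so $[g]\pi_1^{sg}(X,x_0)\subseteq U$ does the job. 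You instead re-derive the containment from first principles with a continuity-of-concatenation argument. The analytic core of that argument --- continuity of your map $F$ at the tuple of constant loops, extraction of the neighbourhoods $W_j$ from basic compact-open neighbourhoods, and saturation of $q^{-1}(U)$ --- is sound and needs no local compactness; it is essentially the proof of \cite[Theorem 2.2]{MTPSG}.

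There is, however, one genuine gap: the form in which you invoke the hypothesis. ``Semilocally small generated'' means that each point $p$ has an open neighbourhood $U_p$ with $i_*\pi_1(U_p,p)\leq \pi_1^{sg}(X,p)$, i.e.\ every loop in $U_p$ based at $p$ is homotopic to a \emph{finite product of conjugates} $\mu*\nu*\mu^{-1}$ of small loops $\nu$ based at the various points $\mu(1)$ --- not, as you state, that every loop in $U_p$ at $p$ is itself a small loop at $p$. Consequently your thinning step (``because each $\beta_j$ is small at $\alpha_j(1)$ it is path-homotopic rel $\alpha_j(1)$ to such a $\delta_j$'') fails as written: a loop $\beta_j$ lying in $V_{\alpha_j}$ need not be small at $\alpha_j(1)$ and in general cannot be pushed into an arbitrarily small neighbourhood of that point. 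The repair is routine but necessary: first expand each $[\beta_j]$ into its finite product of conjugates of genuinely small loops, observe that the whole element of $\widetilde{\pi}(\mathcal{V},x_0)$ is then a finite product of conjugates of small loops based at $x_0$ (so that $\widetilde{\pi}(\mathcal{V},x_0)\leq\pi_1^{sg}(X,x_0)$), and run your multivariable continuity argument on that longer product, thinning each small factor. At that point you will have reproved exactly the two results of \cite{MTPSG} that the paper simply cites.
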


\begin{proof}
Let $ U $ be an arbitrary open subset of $ \pi_1^{\mathrm{qtop}}(X, x_0) $ and take $ [g] \in U $. Clearly, the trivial element of $ \pi_1(X, x_0) $, $ [c_{x_0}] $, belongs to the coset $ [g^{-1}]U $. Since $\pi_1^{\mathrm{qtop}}(X, x_0)$ is a quasitopological group, $ [g^{-1}]U $ is an open subset of $\pi_1^{\mathrm{qtop}}(X, x_0)$. It implies from \cite[Theorem 2.2]{MTPSG} that $ \pi_1^{sg}(X, x_0) \subseteq [g^{-1}]U $.
On the other hand, since $ X $ is a semilocally small generated space, then Theorem 3.8 from \cite{MTPSG} states that $ \pi_1^{sg}(X, x_0) $ is an open subgroup of $\pi_1^{\mathrm{qtop}}(X, x_0)$ and hence it is open in $ \pi_1^{\mathrm{pSpan}}(X, x_0) $. Now $[g] \pi_1^{sg}(X, x_0) \subseteq U $ shows that $ U $ is an open subset of $ \pi_1^{\mathrm{pSpan}}(X, x_0) $ and so $ \pi_1^{\mathrm{pSpan}}(X, x_0) $ is finer than $ \pi_1^{\mathrm{qtop}}(X, x_0) $. The converse statement is easily concluded from Proposition \ref{pr310}.
\end{proof}

 The infinitesimal subgroup of the path Spanier subgroup topology is denoted by $ {\widetilde\pi}^{sp}_1(X,x_0)$. It implies from \cite[Theorem 3.2]{MTPSp} that if $ {\widetilde\pi}^{sp}_1(X,x_0)$ is normal, then  ${\widetilde\pi}^{sp}_1(X,x_0)={\pi}^{sp}_1(X,x_0)$.

Recall from \cite[Definition 4.1]{paper1}  that a space $ X $ is called \textit{semilocally path $ H $-connected} for a subgroup $ H \leq \pi_1(X,x_0) $ if for every path $ \alpha $ beginning at $ x_0 $ there exists an open neighbourhood $ U_\alpha $ of $ \alpha(1) $ with $ i_* \pi_1(U_\alpha ,\alpha(1) ) \leq [\alpha^{-1} H \alpha]$, where $ [\alpha^{-1} H \alpha]= \lbrace [\alpha^{-1}\gamma\alpha] \ \vert \ [\gamma] \in H \rbrace $. The following proposition proposes the same result as Proposition \ref{pr3.3} for the path Spanier topology.

\begin{proposition} \label{pr}
For a connected and locally path connected space X, the following statements are equivalent.
\begin{enumerate}
\item  $ X $ is a $ \Sigma^P $-coverable space.
\item  $ X $ is a semilocally path $ \widetilde{\pi}_1^{sp}(X, x_0) $-connected space.
\item  $\widetilde{\pi}_1^{sp}(X, x_0)$ is an open subgroup of $ \pi_1^{\mathrm{pSpan}}(X, x_0) $.
\end{enumerate}
\end{proposition}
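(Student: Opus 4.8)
The plan is to run the three-way equivalence by first disposing of $(1)\Leftrightarrow(3)$, which is purely formal, and then proving $(2)\Leftrightarrow(3)$ by unwinding the definition of a path Spanier subgroup against the conjugation convention built into ``semilocally path $H$-connected''. For $(1)\Leftrightarrow(3)$: by the definition of $\Sigma^{P}$-coverability, $X$ is $\Sigma^{P}$-coverable precisely when the infinitesimal subgroup $S_{\Sigma^{P}}$ is open in $\pi_1^{\mathrm{pSpan}}(X,x_0)$, and $S_{\Sigma^{P}}=\widetilde{\pi}_1^{sp}(X,x_0)$ by the definition of the latter, so this is verbatim statement $(3)$. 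I would also record, for use below, that $\widetilde{\pi}_1^{sp}(X,x_0)$ is open if and only if it belongs to $\Sigma^{P}$, i.e.\ if and only if $\widetilde{\pi}_1^{sp}(X,x_0)=\widetilde{\pi}(\mathcal{V},x_0)$ for some path open cover $\mathcal{V}$; since the inclusion $\widetilde{\pi}_1^{sp}(X,x_0)\subseteq\widetilde{\pi}(\mathcal{V},x_0)$ holds automatically, the content is simply that some $\widetilde{\pi}(\mathcal{V},x_0)$ sits inside $\widetilde{\pi}_1^{sp}(X,x_0)$.

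For $(3)\Rightarrow(2)$, I would fix a path open cover $\mathcal{V}=\{V_\alpha\mid\alpha\in P(X,x_0)\}$ with $\widetilde{\pi}(\mathcal{V},x_0)\subseteq\widetilde{\pi}_1^{sp}(X,x_0)$, and for a path $\alpha$ from $x_0$ simply take $U_\alpha:=V_\alpha$, an open neighbourhood of $\alpha(1)$. For any loop $\beta$ in $U_\alpha$ based at $\alpha(1)$, the class $[\alpha*\beta*\alpha^{-1}]$ is a one-factor generator of $\widetilde{\pi}(\mathcal{V},x_0)$, hence lies in $\widetilde{\pi}_1^{sp}(X,x_0)$, which says exactly that $[\beta]\in[\alpha^{-1}\,\widetilde{\pi}_1^{sp}(X,x_0)\,\alpha]$. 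Thus $i_*\pi_1(U_\alpha,\alpha(1))\leq[\alpha^{-1}\,\widetilde{\pi}_1^{sp}(X,x_0)\,\alpha]$ for every $\alpha$, which is precisely semilocal path $\widetilde{\pi}_1^{sp}(X,x_0)$-connectedness.

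For $(2)\Rightarrow(3)$, for each path $\alpha$ from $x_0$ I would take the neighbourhood $U_\alpha$ of $\alpha(1)$ supplied by $(2)$ and set $\mathcal{V}:=\{U_\alpha\mid\alpha\in P(X,x_0)\}$; since $X$ is connected and locally path connected, every point of $X$ is the endpoint of some such $\alpha$ and lies in $U_\alpha$, so $\mathcal{V}$ is a genuine path open cover. A typical generator $[\prod_{j=1}^{n}\alpha_j*\beta_j*\alpha_j^{-1}]$ of $\widetilde{\pi}(\mathcal{V},x_0)$ has each $\beta_j$ a loop in $U_{\alpha_j}$ at $\alpha_j(1)$, so $[\beta_j]\in i_*\pi_1(U_{\alpha_j},\alpha_j(1))\leq[\alpha_j^{-1}\,\widetilde{\pi}_1^{sp}(X,x_0)\,\alpha_j]$ and hence $[\alpha_j*\beta_j*\alpha_j^{-1}]\in\widetilde{\pi}_1^{sp}(X,x_0)$; multiplying the factors gives $\widetilde{\pi}(\mathcal{V},x_0)\subseteq\widetilde{\pi}_1^{sp}(X,x_0)$. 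Together with the automatic reverse inclusion this yields $\widetilde{\pi}_1^{sp}(X,x_0)=\widetilde{\pi}(\mathcal{V},x_0)\in\Sigma^{P}$, so it is open, which is $(3)$.

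I do not expect a genuine obstacle: this proposition is the path-Spanier analogue of Proposition \ref{pr3.3}, and the argument is a definition chase. The points that need care are the bookkeeping with the conjugation notation $[\alpha^{-1}H\alpha]$ (making sure that ``loop in $U_\alpha$ based at $\alpha(1)$'' matches both the generators of the path Spanier subgroup and the subgroup $i_*\pi_1(U_\alpha,\alpha(1))$), and the verification in the $(2)\Rightarrow(3)$ direction that the family $\{U_\alpha\}_{\alpha\in P(X,x_0)}$ actually satisfies the requirements of a path open cover of $X$ — this last point, via ``path component of $x_0$ equals $X$'', is exactly where the standing hypotheses that $X$ be connected and locally path connected are used.
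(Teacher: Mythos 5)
Your proposal is correct: the paper actually states Proposition~\ref{pr} without proof (offering it as the path-Spanier analogue of Proposition~\ref{pr3.3}), and your argument --- the tautological identification of $(1)$ with $(3)$ via $S_{\Sigma^{P}}=\widetilde{\pi}_1^{sp}(X,x_0)$, together with the two-way definition chase between single-lollipop generators of $\widetilde{\pi}(\mathcal{V},x_0)$ and the conjugates $[\alpha^{-1}\,\widetilde{\pi}_1^{sp}(X,x_0)\,\alpha]$ --- is exactly the intended one, with the bookkeeping done correctly. Nothing further is needed.
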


Since every Spanier subgroup of the fundamental group $\pi_1(X, x_0)$ is also a path Spanier subgroup, then for any pointed space $ (X, x_0) $ the path Spanier topology on the fundamental group, $\pi_1^{\mathrm{pSpan}}(X, x_0)$, is finer than the Spanier topology, $\pi_1^{\mathrm{Span}}(X, x_0)$. The following example shows that the converse does not hold, in general.

\begin{example} \label{ex314}
 Recall from  \cite{Hat} and \cite[Remark 3.4]{FiZaCo} that The Hawaiian earring, $ HE $, has a semicovering space which is not a covering space. Therefore, there is a path Spanier subgroup of $\pi_1(X, x_0)$ which is not a Spanier subgroup. This fact implies that $\pi_1^{\mathrm{Span}}(HE,0)$ is not equal to $\pi_1^{\mathrm{pSpan}}(HE,0)$ and hence $\pi_1^{\mathrm{Span}}(HE,0)$ is strictly coarser than $\pi_1^{\mathrm{pSpan}}(HE,0)$. On the other hand, since $ \pi_1^{\mathrm{qtop}}(HE,0) $ is not a topological group, then Proposition \ref{pr3.8} shows that $ \pi_1^{\mathrm{pSpan}}(HE,0) $ is strictly coarser than $ \pi_1^{\mathrm{qtop}}(HE,0) $.
\end{example}

Spanier \cite[page 82]{Span} introduced another topology on the universal path space $ \widetilde{X} $ which has been called the whisker topology by Brodskiy et al. \cite{Brod} and denoted by $ \widetilde{X}^{wh} $. Note that the fundamental group $\pi_{1}(X,x_{0})$ as a subspace of $ \widetilde{X}^{wh} $ inherits the whisker topology which is denoted by $\pi_{1}^{\mathrm{wh}}(X,x_{0})$. Similar to the proof of Proposition \ref{pr3.4}, it is shown in \cite[Lemma 3.1]{paper1} that the collection of the following subsets form a basis for the whisker topology on the fundamental group
$$ \lbrace [\alpha]i_{*}\pi_1(U,x_0)  \ \vert \ [\alpha] \in \pi_1(X,x_{0}) \ \ \& \ \ U  \ \ is \ \ an \ \ open \ \ neighborhood \ \ of \ \ X \ \ at \ \ x_0 \rbrace. $$
It implies that the whisker topology is another type of subgroup topology on the fundamental group determined by the following neighbourhood family of subgroups
$$ \Sigma^{wh}= \lbrace i_{*}\pi_1(U,x_0) \ \vert \  U  \ \ is \ \ an \ \ open \ \ neighborhood \ \ of \ \ X \ \ at \ \ x_0 \rbrace. $$

\begin{remark} \label{re315}
Remember that $\pi_1^{\mathrm{wh}}(HE,0)$ is not a topological group because its right translation maps are not continuous. There is an equivalent condition on a pointed topological space $ (X,x_0) $ which guarantees $\pi_{1}^{\mathrm{wh}}(X,x_{0})$ to be a topological group. Indeed, Jamali et al. \cite[Proposition 2.6]{paper3} proved that $\pi_{1}^{\mathrm{wh}}(X,x_{0})$ is a topological group if and only if $ X  $ is $ SLTL $ at $ x_0 $. The space $ (X,x_0) $ is called $ SLTL $ at $ x_0 $ if for every loop $ \alpha \in \Omega(X,x_0) $ and every open neighborhood $ U $ from $ X $ at $ x_0 $, there exists an open neighborhood $ V $ from $ X $ at $ x_0 $ such that for any loop $ \gamma : (I,\dot{I}) \rightarrow (V,x_0) $, there is a loop $ \lambda : (I,\dot{I}) \rightarrow (U,x_0) $ such that $[\lambda]=[\alpha*\gamma*\alpha^{-1}]$.
Moreover, Brodskiy et al. \cite[Proposition 4.21]{Brod} showed that $\pi_{1}^{\mathrm{wh}}(X,x_{0})$ is discrete if and only if $ X $ is semilocally simply connected at $ x_0 $.
\end{remark}

Fischer and Zastrow \cite[Lemma 2.1]{FiZa} showed that the whisker topology is finer than the $ qtop $-topology on the universal path space $ \widetilde{X} $ for any space $ X $. Clearly, the result will hold for the fundamental group $\pi_{1}(X,x_{0})$ as a subspace of $ \widetilde{X} $.

It implies from \cite[Proposition 3.8]{paper1} that the infinitesimal subgroup of $\pi_{1}^{\mathrm{wh}}(X,x_{0})$ is $\pi_{1}^{{s}}(X,x_{0})$, the collection of all small loops at $ x_0 $. Recall from \cite[Definition 4.1]{paper1} that a topological space $ X $ is called \textit{semilocally $ H $-connected at $ x_0 $} if there is an open neighbourhood $ U $ in $ X $ at $ x_0 $ such that $ i_{*}\pi_1(U,x_0)  \leq H $, for a subgroup $ H  $ of the fundamental group. Note that a topological space $ X $ is called \textit{semilocally simply connected at $ x_0 $} if there is an open neighbourhood $ U $ in $ X $ at $ x_0 $ such that $ i_{*}\pi_1(U,x_0)  = 1 $. The following proposition expresses the relationship between these concepts.

\begin{proposition} \label{pr317}
For any space X, the following statements are equivalent.
\begin{enumerate}
\item  $ X$ is  $ \Sigma^{wh} $-coverable space.
\item  $ X $ is  semilocally  $ \pi_1^{s}(X, x_0) $-connected at $ x_0 $.
\item  $ \pi_1^{s}(X, x_0) $ is an open subgroup of $ \pi_1^{\mathrm{wh}}(X, x_0) $.
\end{enumerate}
\end{proposition}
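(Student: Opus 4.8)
The plan is to prove the three equivalences by unwinding definitions, in the same spirit as the proofs of Proposition \ref{pr3.3} and Proposition \ref{pr}. Two facts recorded above will be used: that the infinitesimal subgroup of $\pi_1^{\mathrm{wh}}(X,x_0)$ equals $\pi_1^s(X,x_0)$, so that $S_{\Sigma^{wh}}=\pi_1^s(X,x_0)$ is the intersection of all $i_*\pi_1(U,x_0)$ over open neighbourhoods $U$ of $x_0$ (\cite[Proposition 3.8]{paper1}); and the general observation for a subgroup topology that the infinitesimal subgroup $S_\Sigma$ is open if and only if $S_\Sigma\in\Sigma$.

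For $(1)\Leftrightarrow(3)$: by the definition of $\Sigma$-coverability, $X$ is $\Sigma^{wh}$-coverable exactly when $S_{\Sigma^{wh}}$ is open in $\pi_1^{\Sigma^{wh}}(X,x_0)=\pi_1^{\mathrm{wh}}(X,x_0)$; since $S_{\Sigma^{wh}}=\pi_1^s(X,x_0)$, this is precisely statement $(3)$.

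For $(3)\Leftrightarrow(2)$: by the observation above, $\pi_1^s(X,x_0)$ is open in $\pi_1^{\mathrm{wh}}(X,x_0)$ if and only if $\pi_1^s(X,x_0)\in\Sigma^{wh}$, that is, there is an open neighbourhood $U$ of $x_0$ with $i_*\pi_1(U,x_0)=\pi_1^s(X,x_0)$. Statement $(2)$ only demands an open neighbourhood $U$ of $x_0$ with $i_*\pi_1(U,x_0)\le\pi_1^s(X,x_0)$, but this upgrades automatically to an equality: for every open neighbourhood $V$ of $x_0$ one has $\pi_1^s(X,x_0)\le i_*\pi_1(V,x_0)$, so in particular $\pi_1^s(X,x_0)\le i_*\pi_1(U,x_0)$. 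Hence $(2)$ and $(3)$ are equivalent, which together with the previous step establishes the proposition.

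I do not expect any real obstacle here: the statement is a dictionary translation among coverability, membership of the infinitesimal subgroup in the neighbourhood family, and semilocal $\pi_1^s$-connectedness at $x_0$. The only point needing a little care is invoking the correct identification $S_{\Sigma^{wh}}=\pi_1^s(X,x_0)$ and using the always-valid inclusion $\pi_1^s(X,x_0)\le i_*\pi_1(U,x_0)$ to collapse the inequality in $(2)$ to the equality required for $\pi_1^s(X,x_0)\in\Sigma^{wh}$.
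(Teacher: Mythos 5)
Your proof is correct: the paper states Proposition \ref{pr317} without proof, treating it as an immediate consequence of the definitions, and your definitional unwinding (identifying $S_{\Sigma^{wh}}=\pi_1^{s}(X,x_0)$ via \cite[Proposition 3.8]{paper1}, using the openness criterion $S_\Sigma$ open $\Leftrightarrow$ $S_\Sigma\in\Sigma$, and collapsing the inequality in $(2)$ to an equality via the always-valid inclusion $\pi_1^{s}(X,x_0)\le i_*\pi_1(U,x_0)$) is exactly the intended argument, mirroring Propositions \ref{pr3.3} and \ref{pr}. No gaps.
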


The following proposition is already expressed and proven by Brodesky et al..

\begin{lemma} \cite[proposition 4.21]{Brod} \label{pr318}
A pointed topological space  $ (X,x_0) $ is semilocally simply connected at $ x_0 $ if and only if $ \pi_1^{\mathrm{wh}}(X, x_0) $ is discrete. 
\end{lemma}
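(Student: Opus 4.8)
The plan is to prove both implications directly from the definition of the whisker subgroup topology and the fact that its infinitesimal subgroup is $\pi_1^{s}(X,x_0)$, the subgroup of small loops at $x_0$, together with the characterization of semilocal simple connectivity in terms of a neighbourhood $U$ with $i_*\pi_1(U,x_0)=1$. Recall that the basic open neighbourhoods of the identity in $\pi_1^{\mathrm{wh}}(X,x_0)$ are exactly the subgroups $i_*\pi_1(U,x_0)$ for $U$ ranging over open neighbourhoods of $x_0$, and that $\pi_1^{\mathrm{wh}}(X,x_0)$ is discrete if and only if the trivial subgroup belongs to the neighbourhood family $\Sigma^{wh}$ (by the Remark after Proposition~\ref{pr2.3}, discreteness of a subgroup topology is equivalent to $1\in\Sigma$).

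For the forward direction, suppose $(X,x_0)$ is semilocally simply connected at $x_0$. Then by definition there is an open neighbourhood $U$ of $x_0$ in $X$ with $i_*\pi_1(U,x_0)=1$. Hence the trivial subgroup $\{1\}$ is itself a member of $\Sigma^{wh}$, so $\{1\}$ is a basic open set, i.e. the identity is open. Since left translations are homeomorphisms of $\pi_1^{\mathrm{wh}}(X,x_0)$ (it is a left topological group), every singleton is open, and therefore $\pi_1^{\mathrm{wh}}(X,x_0)$ is discrete.

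For the converse, suppose $\pi_1^{\mathrm{wh}}(X,x_0)$ is discrete. Then the singleton $\{1\}$ is open, so it contains some basic open neighbourhood of the identity; that is, there is an open neighbourhood $U$ of $x_0$ with $i_*\pi_1(U,x_0)\subseteq\{1\}$, hence $i_*\pi_1(U,x_0)=1$. This is precisely the statement that $X$ is semilocally simply connected at $x_0$.

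There is essentially no obstacle here: the whole statement is a direct translation between the definition of discreteness of a subgroup topology and the definition of being semilocally simply connected at $x_0$, once one records that $\Sigma^{wh}=\{i_*\pi_1(U,x_0)\mid U\text{ open nbhd of }x_0\}$ is the neighbourhood family and that discreteness $\Leftrightarrow$ $1\in\Sigma^{wh}$. The only point that requires a word of justification — and the closest thing to a subtlety — is the passage, in the converse, from ``$\{1\}$ open'' to ``$\{1\}$ contains a basic open set'', which is immediate since the cosets of members of $\Sigma^{wh}$ form a basis and the only coset of any subgroup containing $1$ is the subgroup itself.
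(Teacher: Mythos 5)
Your proof is correct. The paper itself gives no proof of Lemma \ref{pr318} --- it is simply quoted from Brodskiy et al.\ \cite{Brod} --- but your argument is exactly the natural one in the framework the paper sets up: the whisker topology is the subgroup topology determined by $\Sigma^{wh}=\lbrace i_{*}\pi_1(U,x_0)\rbrace$, discreteness of a subgroup topology is equivalent to the trivial subgroup belonging to the neighbourhood family (the Remark following Proposition \ref{pr2.3}), and that membership is verbatim the paper's definition of being semilocally simply connected at $x_0$. Both directions are handled cleanly, including the one genuine (if small) point in the converse, namely that an open singleton $\lbrace 1\rbrace$ must contain a basic neighbourhood of the identity of the form $i_{*}\pi_1(U,x_0)$ itself and not a proper coset.
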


By the above statements, one can summarize the relationship between the mentioned topologies on the fundamental group of locally path connected space $ X $ as the following (Note that we use the symbol $ \subseteq $ to show the finer topology on a group. For example, $ G^{\tau_1} \subseteq G^{\tau_2} $ means that $ \tau_2 $ is finer than $ \tau_1 $ and $ G^{\tau_1} \subsetneq G^{\tau_2} $ means that $ \tau_2 $ is strictly finer than $ \tau_1 $).

$$\pi_1^{\mathrm{Span}}(X, x_0) \subseteq \pi_1^{\mathrm{pSpan}}(X, x_0) \subseteq \pi_1^{\mathrm{\tau}}(X, x_0) \subseteq \pi_1^{\mathrm{qtop}}(X, x_0)  \subseteq \pi_1^{\mathrm{wh}}(X, x_0). \hspace{0.5cm}(*)$$

Using Corollary 3.3 of \cite{paper5} one can introduce the equivalent condition to coincide these topologies on the fundamental group. Recall from \cite[definition 1.3]{paper5} that a pointed topological space $ (X,x_0) $  is called strong small loop transfer (strong $ SLT $ for short) space at $ x_0 $ if for every $ x \in X $ and for every open neighborhood $  U $ of $ X $ containing $ x_0 $ there is an open neighborhood $ V $ containing $ x $ such that for every loop $ \beta: (I,\dot{I}) \rightarrow (V,x) $ and for every path $ \alpha: I \rightarrow X $ from $ x_0 $ to $ x $ there is a loop $ \lambda: (I,\dot{I}) \rightarrow (U,x_0) $ such that $ [\alpha*\beta*\alpha^{-1}] = [\lambda] $. 

\begin{proposition} 
If $ X $ is a path connected space, then $ \pi_1^{\mathrm{Span}}(X, x_0) = \pi_1^{\mathrm{wh}}(X, x_0)  $ if and only if $ X $ is strong $ SLT $ at $ x_0 $  space.
\end{proposition}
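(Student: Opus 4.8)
The plan is to reduce the asserted equality of topologies to a containment of subgroups and then recognise that containment as the strong $SLT$ property. Both $\pi_1^{\mathrm{Span}}(X,x_0)$ and $\pi_1^{\mathrm{wh}}(X,x_0)$ are subgroup topologies, and for subgroup topologies one is finer than another precisely when each member of the coarser neighbourhood family contains a member of the finer one. First I would record that $\pi_1^{\mathrm{wh}}(X,x_0)$ is always at least as fine as $\pi_1^{\mathrm{Span}}(X,x_0)$, for any pointed space: given an open cover $\mathcal{U}$ of $X$ pick $U_0\in\mathcal{U}$ with $x_0\in U_0$; then for any loop $\beta$ in $(U_0,x_0)$ the class $[\beta]=[c_{x_0}*\beta*c_{x_0}^{-1}]$ is one of the generating lollipops of $\pi(\mathcal{U},x_0)$, so $i_*\pi_1(U_0,x_0)\leq\pi(\mathcal{U},x_0)$. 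Therefore $\pi_1^{\mathrm{Span}}(X,x_0)=\pi_1^{\mathrm{wh}}(X,x_0)$ holds if and only if, conversely, for every open neighbourhood $U$ of $x_0$ there is an open cover $\mathcal{U}$ of $X$ with $\pi(\mathcal{U},x_0)\leq i_*\pi_1(U,x_0)$ (an open subgroup in a subgroup topology contains a member of its neighbourhood family). It remains to match this condition with strong $SLT$ at $x_0$.

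For the implication that equality forces strong $SLT$, fix $x\in X$ and an open neighbourhood $U$ of $x_0$. By the reduction there is an open cover $\mathcal{U}$ with $\pi(\mathcal{U},x_0)\leq i_*\pi_1(U,x_0)$; choose $V\in\mathcal{U}$ with $x\in V$. For an arbitrary loop $\beta\colon(I,\dot{I})\to(V,x)$ and an arbitrary path $\alpha$ from $x_0$ to $x$ (such an $\alpha$ exists since $X$ is path connected), the loop $\beta$ is based at $\alpha(1)=x\in V\in\mathcal{U}$, so $[\alpha*\beta*\alpha^{-1}]$ is a generating lollipop of $\pi(\mathcal{U},x_0)$ and hence lies in $i_*\pi_1(U,x_0)$; the loop in $(U,x_0)$ witnessing this membership is the required $\lambda$. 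Thus $V$ verifies the strong $SLT$ condition at $x$ for $U$, and $X$ is strong $SLT$ at $x_0$.

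For the converse, assume $X$ is strong $SLT$ at $x_0$ and fix an open neighbourhood $U$ of $x_0$. For each $p\in X$ use strong $SLT$ to pick an open neighbourhood $V_p$ of $p$ such that $[\alpha*\beta*\alpha^{-1}]\in i_*\pi_1(U,x_0)$ for every loop $\beta$ in $(V_p,p)$ and every path $\alpha$ from $x_0$ to $p$, and set $\mathcal{U}=\{V_p\mid p\in X\}$, an open cover of $X$. To prove $\pi(\mathcal{U},x_0)\leq i_*\pi_1(U,x_0)$ it is enough to show that each generating lollipop $[\alpha*\beta*\alpha^{-1}]$ lies in $i_*\pi_1(U,x_0)$, where $\beta$ is a loop based at $q:=\alpha(1)$ and contained in some $V_p\in\mathcal{U}$. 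Pick a path $\sigma$ inside $V_p$ from $p$ to $q$ and set $\alpha_0:=\alpha*\sigma^{-1}$, a path from $x_0$ to $p$, and $\beta_0:=\sigma*\beta*\sigma^{-1}$, a loop based at $p$ and contained in $V_p$. Then $[\alpha*\beta*\alpha^{-1}]=[\alpha_0*\beta_0*\alpha_0^{-1}]$, which by the choice of $V_p$ lies in $i_*\pi_1(U,x_0)$. Hence $\pi(\mathcal{U},x_0)\leq i_*\pi_1(U,x_0)$, and by the reduction the two topologies coincide.

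I expect the genuine difficulty to be entirely in the last paragraph, and to be a matter of basepoint bookkeeping rather than of topology: a generating lollipop of a Spanier subgroup involves a loop based at an arbitrary path endpoint $q$, while strong $SLT$ only controls loops based at the centre $p$ of the chosen neighbourhood $V_p$. The reparametrisation by a path $\sigma$ from $p$ to $q$ inside $V_p$ fixes this, but presupposes that $p$ and $q$ lie in the same path component of $V_p$; this is automatic when the $V_p$ can be taken path connected, which holds when $X$ is moreover locally path connected and is otherwise what \cite[Corollary 3.3]{paper5} (cited just before the statement) supplies. Everything else — the two basic-subgroup comparisons and the direct verification in the forward direction — is routine once the reduction in the first paragraph is in place.
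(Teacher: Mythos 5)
Your proof is a genuinely different route from the paper's: the paper disposes of this proposition in one line, by combining Corollary 3.3 of \cite{paper5} (strong $SLT$ at $x_0$ is equivalent to the whisker and lasso topologies agreeing) with Proposition \ref{pr3.4} (lasso $=$ Spanier subgroup topology on $\pi_1$). You instead give a self-contained argument: reduce equality of two subgroup topologies to mutual containment of neighbourhood-family members, observe that $i_*\pi_1(U_0,x_0)\leq\pi(\mathcal{U},x_0)$ always (so the whisker topology is finer than the Spanier one for \emph{any} space, a containment the paper's Chain $(*)$ only records for locally path connected spaces), and then match the remaining containment $\pi(\mathcal{U},x_0)\leq i_*\pi_1(U,x_0)$ with the strong $SLT$ condition. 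The reduction and the forward implication (equality $\Rightarrow$ strong $SLT$) are correct and complete as written; your argument in effect reproves the content of the cited corollary rather than invoking it, which is more informative than what the paper offers.

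There is, however, one genuine gap, and you have located it yourself but not closed it: in the converse direction, a generating lollipop of $\pi(\mathcal{U},x_0)$ for the cover $\mathcal{U}=\{V_p\}$ has its loop $\beta$ based at $q=\alpha(1)\in V_p$, while strong $SLT$ only controls loops based at the centre $p$ of $V_p$ (or loops based at $q$ that happen to lie in the possibly much smaller set $V_q$). Your repair via a path $\sigma$ from $p$ to $q$ inside $V_p$ requires $q$ to lie in the path component of $p$ in $V_p$, which fails in general when $X$ is merely path connected; the proposition as stated does not assume local path connectedness. Saying that this case ``is otherwise what Corollary 3.3 supplies'' does not repair a self-contained proof --- if you appeal to that corollary at this point you may as well cite it for the whole statement, as the paper does. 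So either add local path connectedness as a hypothesis (under which your proof is complete), or supply an argument for transferring loops based at arbitrary points of $V_p$; as it stands, that one step would fail for a space with badly disconnected small neighbourhoods.
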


\begin{proof}
The result comes from the combination of Corollary 3.3 from \cite{paper5} and Proposition \ref{pr3.4}.
\end{proof}

Brazas in \cite{BrazG} introduced generalized covering spaces inspired by the initial approach of Fischer and Zastrow in \cite{FiZa}. He also introduced generalized covering subgroups of the fundamental group $\pi_1(X, x_0)$ and showed that the intersection of any collection of generalized covering subgroups is also a generalized covering subgroup \cite[Theorem 2.36]{BrazG}. Abdullahi et al. \cite[Lemma 2.10]{paper1} showed that a subgroup $ H $ of the fundamental group $\pi_1(X, x_0)$ is a generalized covering subgroup if and only if $ (p_H)_* \pi_1(\widetilde{X}, \tilde{x}_0) = H $. We intend to introduce another subgroup topology on the fundamental group based on its generalized covering subgroups.

\begin{definition} \label{de321}
 For a pointed space $ (X,x_0) $, let $\Sigma^{\mathrm{g}} $ be the collection of all subgroups $ H $ of  $\pi_1(X, x_0)$ with the property $ (p_H)_* \pi_1(\widetilde{X}, \widetilde{x}_0) = H $. We call the subgroup topology determined by $\Sigma^{\mathrm{g}} $ the \textit{generalized covering topology} and denote it by $\pi_1^{\mathrm{gcov}}(X, x_0)$.
\end{definition}

Abdullahi et al. \cite[Definition 2.3]{paper1} considered the infinitesimal subgroup of the generalized covering topology and denoted it by $ \pi_1^{gc}(X, x_0) $. Also, it was remarked  that $ \pi_1^{gc}(X, x_0) $ is always a generalized covering subgroup and so it is an open subgroup of $\pi_1^{\mathrm{gcov}}(X, x_0)$.  This result implies that any space $ X $ is a $ \Sigma^{\mathrm{g}} $-coverable space.

 As mentioned in the above, for a locally path connected space $ X $, every path Spanier group $ \widetilde{\pi}(\mathcal{V}, x_{0}) $ is an open subgroup of $\pi_1^{\mathrm{qtop}}(X, x_0)$. It is also a closed subgroup since $\pi_1^{\mathrm{qtop}}(X, x_0)$ is a quasitopological group. Recall from \cite[Theorem 2.36]{BrazG} that every closed subgroup of $\pi_1^{\mathrm{qtop}}(X, x_0)$ is a generalized covering subgroup. Then, $ \widetilde{\pi}(\mathcal{V}, x_{0})  \in \Sigma^{\mathrm{g}} $. Therefore, $ \widetilde{\pi}(\mathcal{V}, x_{0}) $ is an open subgroup of $\pi_1^{\mathrm{gcov}}(X, x_0)$. It implies that $\pi_1^{\mathrm{gcov}}(X, x_0)$ is finer than $\pi_1^{\mathrm{pSpan}}(X, x_0)$ in the case of locally path connected spaces. A similar result holds for $ qtop $-topology in the following theorem.
 
\begin{proposition}
  For a connected, locally path connected space $ (X,x_0) $, the generalized covering topology on the fundamental group, $\pi_1^{\mathrm{gcov}}(X, x_0)$, is finer than $\pi_1^{\mathrm{qtop}}(X, x_0)$.
\end{proposition}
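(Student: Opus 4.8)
The plan is to show that every open subgroup of $\pi_1^{\mathrm{qtop}}(X,x_0)$ lies in the neighbourhood family $\Sigma^{\mathrm{g}}$, which by definition of the subgroup topology immediately gives that $\pi_1^{\mathrm{gcov}}(X,x_0)$ is finer than $\pi_1^{\mathrm{qtop}}(X,x_0)$. First I would recall that since $X$ is locally path connected, $\pi_1^{\mathrm{qtop}}(X,x_0)$ is a quasitopological group (this is standard, and also used implicitly in the discussion preceding the statement), so every open subgroup $H$ is also closed: indeed the complement of $H$ is the union of the nontrivial cosets of $H$, each of which is open by continuity of the translations. Then I would invoke the result quoted from Brazas \cite[Theorem 2.36]{BrazG}, already cited in the excerpt, that every closed subgroup of $\pi_1^{\mathrm{qtop}}(X,x_0)$ is a generalized covering subgroup, i.e.\ satisfies $(p_H)_*\pi_1(\widetilde{X},\widetilde{x}_0)=H$, hence belongs to $\Sigma^{\mathrm{g}}$.

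Next I would assemble these facts. Let $U$ be an arbitrary open subset of $\pi_1^{\mathrm{qtop}}(X,x_0)$ and pick $[g]\in U$. Translating by $[g^{-1}]$, the coset $[g^{-1}]U$ is an open neighbourhood of the trivial element $[c_{x_0}]$ in $\pi_1^{\mathrm{qtop}}(X,x_0)$. Here I would need to produce an open \emph{subgroup} $H$ of $\pi_1^{\mathrm{qtop}}(X,x_0)$ with $[c_{x_0}]\in H\subseteq [g^{-1}]U$; this is precisely the point where I expect to use that open subgroups coincide (for locally path connected $X$) with the open subsets containing the identity that arise from path open covers, i.e.\ the path Spanier subgroups $\widetilde{\pi}(\mathcal{V},x_0)$ — by \cite[Corollary 3.4]{MTPSp}, a subgroup of $\pi_1^{\mathrm{qtop}}(X,x_0)$ is open iff it contains some $\widetilde{\pi}(\mathcal{V},x_0)$, and conversely each such is open. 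So some basic open neighbourhood of the identity inside $[g^{-1}]U$ contains a path Spanier subgroup $\widetilde{\pi}(\mathcal{V},x_0)$, which is an open, hence closed, hence (by Brazas) generalized covering subgroup, so $\widetilde{\pi}(\mathcal{V},x_0)\in\Sigma^{\mathrm{g}}$. Then $[g]\widetilde{\pi}(\mathcal{V},x_0)$ is a basic open set of $\pi_1^{\mathrm{gcov}}(X,x_0)$ with $[g]\in[g]\widetilde{\pi}(\mathcal{V},x_0)\subseteq U$, so $U$ is open in $\pi_1^{\mathrm{gcov}}(X,x_0)$.

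Actually a cleaner route, avoiding the passage through path Spanier subgroups, is available and I would prefer it: since $\pi_1^{\mathrm{gcov}}(X,x_0)$ was already shown (in the paragraph just before the statement) to be finer than $\pi_1^{\mathrm{pSpan}}(X,x_0)$ for locally path connected $X$, and $\pi_1^{\mathrm{pSpan}}(X,x_0)$ is finer than $\pi_1^{\mathrm{qtop}}(X,x_0)$ — wait, that inequality goes the wrong way (Chain $(*)$ has $\pi_1^{\mathrm{pSpan}}\subseteq\pi_1^{\mathrm{qtop}}$). So that shortcut does not work, and one genuinely must argue directly. Thus the argument of the previous paragraph is the one to carry out. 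The main obstacle is the identification of open subsets of $\pi_1^{\mathrm{qtop}}$ near the identity with cosets of closed (equivalently, path Spanier) subgroups, for which one leans on \cite[Theorem 2.36]{BrazG} together with the open-subgroup description \cite[Corollary 3.4]{MTPSp}; once those two inputs are in hand the rest is a routine translation argument as in the proof of the preceding theorem in the excerpt.
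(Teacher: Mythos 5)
Your first two ingredients are fine and match the discussion immediately preceding the statement: open subgroups of $\pi_1^{\mathrm{qtop}}(X,x_0)$ are closed, and closed subgroups are generalized covering subgroups by \cite[Theorem 2.36]{BrazG}, so every path Spanier subgroup lies in $\Sigma^{\mathrm{g}}$. But that only yields that $\pi_1^{\mathrm{gcov}}(X,x_0)$ is finer than $\pi_1^{\mathrm{pSpan}}(X,x_0)$, which the paper already records. The genuine gap is the step where you claim that the open neighbourhood $[g^{-1}]U$ of the identity contains a path Spanier subgroup (equivalently, an open subgroup of $\pi_1^{\mathrm{qtop}}(X,x_0)$). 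Corollary 3.4 of \cite{MTPSp} tells you when a given \emph{subgroup} is open; it says nothing about an arbitrary open set containing the identity containing an open subgroup. In fact, if every qtop-open neighbourhood of $[c_{x_0}]$ contained some $\widetilde{\pi}(\mathcal{V},x_0)$, then every qtop-open set would be a union of cosets of path Spanier subgroups and $\pi_1^{\mathrm{pSpan}}(X,x_0)$ would coincide with $\pi_1^{\mathrm{qtop}}(X,x_0)$ for every locally path connected space --- contradicting Example \ref{ex314}, where these two topologies on $\pi_1(HE,0)$ are shown to be distinct. So your argument proves too much, and it cannot be repaired along these lines.

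The paper circumvents this by using a subgroup that sits inside \emph{every} qtop-open neighbourhood of the identity, namely one contained in the closure of the identity. In a quasitopological group the closure $\overline{[c_{x_0}]}=\overline{\pi_1^{sg}(X,x_0)}$ equals the intersection of all open neighbourhoods of the identity, hence $\overline{[c_{x_0}]}\subseteq [g^{-1}]U$ (this is \cite[Corollary 2.4]{MTPSG}). The chain of subgroups in \cite[Theorem 2.6]{paper1} gives $\pi_1^{gc}(X,x_0)\leq \overline{\pi_1^{sg}(X,x_0)}$, and $\pi_1^{gc}(X,x_0)$, the infinitesimal subgroup of the gcov topology, is itself a generalized covering subgroup and so belongs to $\Sigma^{\mathrm{g}}$. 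Therefore $[g]\pi_1^{gc}(X,x_0)$ is a basic gcov-open set with $[g]\in[g]\pi_1^{gc}(X,x_0)\subseteq U$. That is the missing idea: replace the (in general nonexistent) path Spanier subgroup inside $[g^{-1}]U$ by $\pi_1^{gc}(X,x_0)$.
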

\begin{proof}
Let $ U $ be an arbitrary open subset of $ \pi_1^{\mathrm{qtop}}(X, x_0) $ and take $ [g] \in U $. We show that $ [g]\pi_1^{gc}(X, x_0) \subseteq U $ which implies that $ U $ is an open subset of $\pi_1^{\mathrm{gcov}}(X, x_0)$.

Clearly, the trivial element of $ \pi_1(X, x_0) $, $ [c_{x_0}] $, belongs to the coset $ [g^{-1}]U $ and $ [g^{-1}]U $ is an open subset of $\pi_1^{\mathrm{qtop}}(X, x_0)$ since it is a quasitopological group. Then  by \cite[Corollary 2.4]{MTPSG}, $ \overline{[c_{x_0}]} \subseteq [g^{-1}]U $, where $\overline{[c_{x_0}]} = \overline{\pi_1^{sg}(X, x_0)}  $ is the closure of the trivial element in $\pi_1^{\mathrm{qtop}}(X, x_0)$.
 Using the chain of subgroups of the fundamental group which was introduced in \cite[Theorem 2.6]{paper1}, we have $ \pi_1^{gc}(X, x_0) \leq \overline{\pi_1^{sg}(X, x_0)} \subseteq [g^{-1}]U $. Therefore, $ [g]\pi_1^{gc}(X, x_0) \subseteq U $.
\end{proof}

\begin{example} \label{ex319}
Fischer et al. \cite{FiZa} showed that the universal path space of Hawaiian earring, $ HE $, is a generalized covering space. It implies that the trivial subgroup of $ \pi_1(HE, 0) $ is a generalized covering subgroup, i.e. $ \pi_1^{\mathrm{gcov}}(HE, 0) $ is discrete, where we know that $ \pi_1^{\mathrm{qtop}}(HE, 0) $ is not discrete. Moreover, it can easily conclude from Remark \ref{re315} that $ \pi_1^{\mathrm{wh}}(HE, 0) $ also is not discrete. Then, 
\[
 \pi_1^{\mathrm{qtop}}(HE, 0) \subsetneq  \pi_1^{\mathrm{wh}}(HE, 0) \subsetneq \pi_1^{\mathrm{gcov}}(HE, 0).
 \] 
 \end{example}
 
 \begin{example} \label{ex323}
 It was shown in \cite[Example 3.11]{paper1} that the \textit{Harmonic Archipelago}, $ HA $, dose not admit any generalized covering space except the trivial covering. Thus $ \pi_1^{\mathrm{gcov}}(HA, b) $ is trivial, but $ \pi_1^{\mathrm{wh}}(HA, b) $ is discrete where $ b \in HA $ is a non canonical based point. Therefore, the whisker topology and the generalized covering topology may not compare, in general. Moreover, $ \pi_1^{\mathrm{Span}}(HA, b) $ and $ \pi_1^{\mathrm{pSpan}}(HA, b) $ both are trivial since they, unlike the whisker topology, is independent of the choice of the base point.
\end{example}

Recall from \cite{paper3} that since $ HE $ is not a $ SLT $ at $ 0 $ space, then $ \pi_1^{\mathrm{qtop}}(HE, 0)  $ and $ \pi_1^{\mathrm{wh}}(HE, 0) $ are not equal. This fact together with Examples \ref{ex314} and \ref{ex319} implies that all mentioned topologies on the fundamental group of $ HE $ are not equal. Therefore, each of the following topologies is strictly finer than the previous one.
 $$\pi_1^{\mathrm{Span}}(HE, 0) \subsetneq \pi_1^{\mathrm{pSpan}}(HE, 0)  \subsetneq \pi_1^{\mathrm{qtop}}(HE, 0)  \subsetneq \pi_1^{\mathrm{wh}}(HE, 0) \subsetneq \pi_1^{\mathrm{gcov}}(HE, 0).$$

Moreover, since $ \pi_1^{\mathrm{qtop}}(HE, 0) $ is not a topological group, then by \cite[Lemma 41]{BrazFab} the canonical isomorphism $ \varphi: \pi_1^{\mathrm{qtop}}(HE,0) \times \pi_1^{\mathrm{qtop}}(HE,0) \rightarrow \pi_1^{\mathrm{qtop}}(HE \times HE,(0,0)) $ is not continuous, while by Proposition \ref{pr26} it is a homeomorphism  for any of the other topologies mentioned above.

\begin{tikzpicture}
\node (a) at (0,0) {$\pi_1^{sp} (X,0)=1$};
\node (b) at (0,-1.7) {$X$ is unbased semilocally simply connected};
\node (c) at (0,-3.4) {$\pi_1^{\text{Span}} (X,x_0)$ is discrete};
\node (d) at (3,-6.3) {$\pi_1^{\text{wh}} (X,x_0)$ is discrete};
\node (e) at (-3,-6.3) {$\pi_1^{\text{pSpan}} (X,x_0)$ is discrete};
\node (f) at (4,-8) {$X$ is semilocally simply connected at $x_0$};
\node (g) at (-4,-8) {$\widetilde\pi_1^{sp} (X,x_0)=1$};
\node (h) at (4,-9.7) {$\pi_1^s (X,x_0)=1$};

\draw [thick,->] (b) -- (a);
\draw [arrows={Implies}-{Implies},double,double distance=2pt, thick] (b) -- (c);
\draw [thick,->,decoration={
    markings,
    mark=at position 0.5 with {\arrow{|}}},postaction=decorate](d) -- (c);
\draw [thick,->] (.8,-3.8) -- (3,-5.9);
\draw [thick,->,decoration={
    markings,
    mark=at position 0.5 with {\arrow{|}}},postaction=decorate](d) -- (e);
    \draw [thick,->] (-.8,-6.6) -- (1,-6.6);
    
\draw [arrows={Implies}-{Implies},double,double distance=2pt,thick] (d) -- (f);
\draw [arrows={Implies}-{Implies},double,double distance=2pt,thick] (e) -- (c);

\draw [thick,->] (f) -- (h);
\draw [thick,->] (e) -- (g);

\node at (4.05,-7) {\ref{pr318}};
\node at (-.5,-2.5) {\ref{pr33}};
\node at (-1.8,-4.7) {\ref{co38}};
\node at (0,-6) {\ref{ex323}};
\node at (0,-6.9) {*};
\node at (1.4,-5.2) {\ref{ex323}};
\node at (2.1,-4.7) {*};
\end{tikzpicture}

\begin{figure}[h!]
$$Diagram: Discreteness \ of \ some \ subgroup \ topologies \ on \ the \ fundamental \ group$$
$$ (X \ is  \ locally \ path \ connected)$$
\end{figure}

\section*{Reference}

\bibliography{mybibfile}




\end{document}